\newtheorem{thm}{Theorem}[section]
\newtheorem{prop}[thm]{Proposition}
\newtheorem{lem}[thm]{Lemma}
\newtheorem{cor}[thm]{Corollary}
\newtheorem{problem}[thm]{Problem}
\newtheorem{conj}[thm]{Conjecture}
\def\N{\mathbb{N}}
\def\Z{\mathbb{Z}}
\def\N{\mathbb{N}}
\def\R{\mathbb{R}}
\def\XX{\mathcal{X}}
\def\YY{\mathcal{Y}}
\def\ZZ{\mathcal{Z}}
\def\SS{\mathcal{S}}
\def\SS{\mathcal{S}}
\def\mdim{\text{\rm mdim}}
\def\AA{\mathcal{A}}
\def\BB{\mathcal{B}}
\def\coindex{\text{\rm coind}}
\def\coindexPER{\text{\rm coind}^{\rm Per}}
\numberwithin{equation}{section}
\title{Finite mean dimesnion and marker property}
\author{Ruxi Shi
}
\address
{Institute of Mathematics, Polish Academy of Sciences, ul. \'Sniadeckich 8, 00-656 Warszawa, Poland}
\email{rshi@impan.pl}
\begin{document}
\keywords{$\mathbb{Z}_p$-index, mean dimension, marker property.}
\subjclass[2020]{Primary: 37B05, 55M35.}	
	\maketitle
\begin{abstract}
    In this paper, we develop the theory of $\mathbb{Z}_p$-index which has been introduced by Tsukamoto, Tsutaya and Yoshinaga.
    As an application, we show that given any positive number, there exists a dynamical system with mean dimension equal to such number such that it does not have the marker property.
\end{abstract}

\section{Introduction}

A \textit{(topological) dynamical system} is a pair $(X,T)$ where $X$ is compact metrizable space and $T: X\to X$ is a homeomorphism. The most basic invariant of dynamical systems is topological
entropy which has been studied for a long time. Gromov \cite{G} introduced a new topological invariant of dynamical systems called {\it mean topological dimension} (see the precise definition in Section \ref{sec:Preliminaries}). It was further developed by Lindenstrauss and Weiss \cite{LinWeiss2000MeanTopologicalDimension}. Roughly speaking, mean topological dimension captures the complexity of dynamical systems of infinite entropy.

For a dynamical system $(X, T)$, let $P_n(X,T)$ be the set of $n$-periodic points, i.e.
$$
P_n(X,T):=\{x\in X: T^nx=x \}.
$$
The dynamical system $(X,T)$ is said to be \textit{aperiodic} if $P_n(X,T)=\emptyset$ for all $n\ge 1$. For a dynamical system without fixed points (i.e. $P_1=\emptyset$), Tsukamoto, Tsutaya and Yoshinaga \cite{tsukamoto2020markerproperty} introduced $\Z_p$-index to
study  the set of $p$-periodic points for prime number $p$. They showed its growth is at most linear in $p$.
Moreover, they applied the theory of $\Z_p$-index to study the marker property.
For a dynamical system $(X,T)$, it is said to satisfy the {\it marker property} if for each positive integer $N$ there exists an open set $U\subset X$ satisfying that
$$
U\cap T^{-n}U=\emptyset~\text{for}~0<n<N~\text{and}~X=\cup_{n\in \Z} T^{n}U.
$$
For example, an extension of an aperiodic minimal system has
the marker property \cite[Lemma 3.3]{L99}. Obviously, the marker property implies the aperiodicity. 

The marker property has been extensively used in the theory of mean dimension.  Lindenstrauss \cite{L99} proved that for a dynamical system having the marker property, it has zero mean dimension if and only if it is isomorphic to an inverse limit of finite entropy dynamical systems.
Gutman, Qiao and Tsukamoto \cite{gutman2019application} showed that a dynamical system having the marker property with mean dimension smaller than $N/2$ can be embedded in the shift of Hilbert cube $([0,1]^N)^\Z$. Lindenstrauss and Tsukamoto \cite{lindenstrauss2019double} proved that if a dynamical system has the marker property, then one can find a metric such that the upper metric mean  dimension is equal to mean dimension of this dynamical system. The {\it metric mean  dimension} was introduced by Lindenstrauss and Weiss \cite{LinWeiss2000MeanTopologicalDimension} which  majors the value of mean topological dimension. See also \cite{Gut15Jaworski,GutLinTsu15, gutman2017embedding,gutman2020embedding,tsukamoto2020potential} and references therein where the marker property was investigated. 

As we mentioned before,  the marker property implies the aperiodicity. For several years, it has been specially curious whether the converse is true, i.e. whether the aperiodicity implies the marker property. This open problem was stated by Gutman (\cite[Problem 5.4]{Gut15Jaworski}, \cite[Problem 3.4]{gutman2017embedding}) and its origin goes back to the paper of Lindenstrauss \cite{L99}. Very recently, Tsukamoto, Tsutaya and Yoshinaga \cite{tsukamoto2020markerproperty} solved this problem by constructing a counter-example of an aperiodic dynamical system with infinite mean dimension which does not have the marker property. It is very surprising that such a counter-example comes from an application of $\Z_p$-index theory which they studied in the same paper. Since the example that they constructed is infinite mean dimensional, they proposed the following problem  \cite[Problem 7.6]{tsukamoto2020markerproperty}.

\begin{problem}\label{prob}
Construct an aperiodic dynamical system with finite mean dimension which does
not have the marker property.
\end{problem}
As Tsukamoto, Tsutaya and Yoshinaga commented in their paper, they believed that such an example exists but it seems difficult to construct such example by their current method.

In this paper, we continue to investigate $\Z_p$-index theory and its connection to dynamical systems. As an application, we give an affirmative answer to Problem \ref{prob}.  Furthermore, we show that the desired dynamical system that we constructed could have mean dimension arbitrarily close to zero. Our main result is stated as follows.
\begin{thm}\label{main thm}
For any $\eta>0$, there exists an aperiodic dynamical system with mean dimension small than $\eta$ which does not have the marker property. 
\end{thm}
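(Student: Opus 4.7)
My approach follows the philosophy of Tsukamoto--Tsutaya--Yoshinaga: exhibit, for some prime $p$, a dynamical system whose $p$-periodic point set $P_p$ has $\mathbb{Z}_p$-index too large to be compatible with the marker property, while simultaneously controlling the mean dimension of the whole system. Because the example in \cite{tsukamoto2020markerproperty} has infinite mean dimension, the new input required is twofold: a quantitative upper bound on $\coindex_{\mathbb{Z}_p} P_p$ for marker systems of finite mean dimension, and a flexible construction whose mean dimension can be tuned below any prescribed $\eta$.

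The first step is to refine the TTY obstruction into a quantitative inequality of the shape
\[
 \coindex_{\mathbb{Z}_p}(P_p(X,T)) \le C\cdot \mdim(X,T)\cdot p + C',
\]
valid for every aperiodic $(X,T)$ with the marker property and every prime $p$, with constants $C,C'$ independent of $p$. Following TTY, one uses a marker set of depth $N$ to partition each $p$-periodic orbit into $\lceil p/N\rceil$ chunks; each chunk is constrained to lie in a Bowen-type set whose equivariant cohomological dimension is bounded by the local topological complexity over $N$ iterates, which by the definition of mean dimension is essentially $\mdim\cdot N$. Optimizing $N$ against $p$ yields the displayed bound, whose slope in $p$ is controlled by $\mdim$.

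The second step is to construct, for every prime $p$ and integer $m$, an aperiodic system $(X_{p,m},T_{p,m})$ whose $p$-periodic locus contains a $\mathbb{Z}_p$-equivariant copy of an odd-dimensional sphere $S^{2m-1}\subset\mathbb{C}^m$ carrying the standard free linear $\mathbb{Z}_p$-rotation, so that $\coindex_{\mathbb{Z}_p} P_p \ge 2m-1$, while keeping $\mdim(X_{p,m},T_{p,m})\le C''m/p$. A natural candidate is a subshift-like construction over a low-dimensional alphabet that contains the sphere as a distinguished region, together with transition rules forcing the free $\mathbb{Z}_p$-action on length-$p$ blocks whose configurations lie in the spherical region, and permitting genuinely aperiodic behaviour elsewhere. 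Since only a vanishing proportion of admissible sequences is forced to live on the sphere, the mean dimension is essentially that of the alphabet divided by $p$.

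Combining these two steps, given $\eta>0$ one takes $p$ large and $m\approx\alpha p$ in a window $C''\alpha<\eta$ and $2\alpha>C\eta$; provided the constants in Steps 1 and 2 are made sufficiently tight, this window is non-empty and we obtain a system with $\mdim<\eta$ whose $\mathbb{Z}_p$-index of $P_p$ violates the bound of Step~1, so that the marker property must fail. The main obstacle is the rigorous execution of Step~2: one needs an equivariant embedding of a high-index free $\mathbb{Z}_p$-action on a sphere inside the $p$-periodic locus of an honest aperiodic dynamical system, while keeping the surrounding aperiodic orbits both dynamically rich enough to rule out stray periodic points and low-dimensional enough to avoid inflating the mean dimension. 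Accomplishing this cleanly will require the equivariant-topological refinements of $\mathbb{Z}_p$-index theory developed in the earlier sections of the paper, used as a bookkeeping device to control both the index of $P_p$ from below and the Widim/Bowen complexity of the surrounding orbits from above.
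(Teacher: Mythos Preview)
Your proposal rests on a misreading of how the TTY obstruction operates, and this makes both Step~1 and Step~2 vacuous as stated. Recall that the marker property \emph{implies} aperiodicity (this is noted in the introduction). Hence for any system $(X,T)$ with the marker property one has $P_p(X,T)=\emptyset$ and $\coindex_{\mathbb{Z}_p}P_p(X,T)=-1$ for every prime $p$; your Step~1 inequality is therefore trivially satisfied and carries no information. Symmetrically, Step~2 asks for an \emph{aperiodic} system whose $p$-periodic locus contains a sphere, but an aperiodic system has empty $P_p$ by definition. The contradiction you aim for in Step~3 thus cannot be set up: you can never violate a bound on $\coindex_{\mathbb{Z}_p}P_p$ of the candidate system itself, because that coindex is $-1$ on both sides of the argument.

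The paper's mechanism is indirect in exactly the way your sketch is not. One first builds a sequence of fixed-point-free (but \emph{not} aperiodic) subshifts $X_m=\XX(N,q(m),\delta)\subset(\SS^N)^{\mathbb Z}$ whose periodic coindices satisfy, for all primes $p>m$, the uniform lower bound $\coindex_p^{\mathrm{Per}}(X_m,\sigma)=\coindex_p^{\mathrm{Per}}(\XX(N,1,\delta),\sigma)\ge \coindex_p^{\mathrm{Per}}(\ZZ,\sigma)+1$. An inverse limit $(\XX,T)=\varprojlim X_m$ along carefully designed equivariant surjections is then aperiodic and has $\mdim\le N$ by Proposition~\ref{prop:mean dimension of inverse limit}. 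The obstruction to the marker property is a \emph{descent} argument: if $(\XX,T)$ had the marker property one would get an equivariant map $\XX\to\YY$ (Lemma~\ref{lem:to Y}), and the existence of continuous sections $\gamma_m:X_m\to\XX$ (built from the right inverses $\eta_{m-1,m}$ of the bonding maps) lets one push this down to an equivariant map $X_M\to\ZZ$ for some $M$ (Lemma~\ref{lem:to Z}), contradicting the coindex gap at the level of the building block $X_M$, not of $\XX$. Finally, the passage from ``finite mean dimension'' to ``mean dimension $<\eta$'' is achieved not by tuning parameters inside this construction but by a separate, soft device: the $\tfrac{1}{n}$-time system $(\XX\times\mathbb Z_n,T_n)$ has mean dimension $\tfrac{1}{n}\mdim(\XX,T)$ and fails the marker property if and only if $(\XX,T)$ does (Proposition~\ref{prop:suspension flow and marker property}).
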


Now we present the main idea and explain what are the main difficulties in the proof Theorem \ref{main thm}. There are two principal steps in the proof. 

Step 1. We first find a family of dynamical systems with same periodic coindex (see the precise definition in Section \ref{sec:Coindex}) and uniformly bounded mean dimension from above which has the universal property (see the discussion in Section \ref{sec:Universal dynamical system}). In fact, these dynamical systems turn out to be subshifts of $((\R/2\Z)^N)^\Z$ for some positive number $N$. We choose a sequence of dynamical systems $(X_n, T_n)$ in such family satisfying that the smallest period is getting larger and larger as $n$ tends to infinity.  
Then we show that the inverse limit of $\{(X_n, T_n)\}_{n\in \mathbb{N}}$ is an aperiodic dynamical system with finite mean dimension which does not have the marker property (Theorem \ref{thm:finite mean dimension}). This gives a positive answer to Problem \ref{prob}. The main difficulties that we face in the above construction is the following 
two issues. The first one is that we need certain group structure in the base space of subshifts. This is the reason why we can not apply directly the universal dynamical systems studied in \cite{tsukamoto2020markerproperty} to our proof. The second one is that we need a continuous left inverse map of the nature projection of the inverse limit (see the defintion of $\gamma_m$ in Section \ref{sex:Finite mean dimension}). The disadvantage of using the inverse limit is that we can not expect the result (Lemma \ref{lem:to Z}) similar to \cite[Lemma 6.1]{tsukamoto2020markerproperty} holding for all inverse limits (unlike \cite[Lemma 6.1]{tsukamoto2020markerproperty} holds for all infinite product). However, the advantage is that the mean dimension of the inverse limit is controlled by the superior of the mean dimensions of $(X_n, T_n)$ (See Proposition \ref{prop:mean dimension of inverse limit}). 

Step 2. Based on the dynamical system $(X,T)$ (i.e. the inverse limit of $\{(X_n, T_n)\}_{n\in \mathbb{N}}$ constructed in Step 1), we build a $\frac{1}{n}$-time system $(X\times \Z_n, T_n)$ (see the precise definition in Section \ref{sec:Marker property and suspension flow}) such that it has the marker property for every $n\in \N$. Since the mean dimension of $\frac{1}{n}$-time system $(X\times \Z_n, T_n)$ is the mean dimension of $(X, T)$ divided by $n$, we get the desired result.


By Theorem \ref{main thm} (and as well as Corollary \ref{cor:any mean dimension example}), one can not expect that the marker property implies aperiodicity for dynamical systems with positive mean dimension. The zero mean dimensional ones is the only case left. It seems to be believable that the marker property and zero mean dimension together imply aperiodicity. In fact, this is equivalent to a conjecture of Lindenstrauss \cite{L99}.
See the discussion in Section \ref{sec:open problem}.

The paper is organized as follows. In Section \ref{sec:Preliminaries}, we recall the definitions of inverse limit and mean dimension. In Section \ref{sec:Coindex}, we recall the $\Z_p$-coindex of dynamical system which was introduced in \cite{tsukamoto2020markerproperty}. In Section \ref{sec:Universal dynamical system}, we construct a universal dynamical system without fixed points. In Section \ref{sex:Finite mean dimension}, we construct an aperiodic dynamical system with finite mean dimension which does not have marker property. In Section \ref{sec:Marker property and suspension flow}, we construct an aperiodic dynamical system without marker property whose mean dimension is arbitrarily small. In Section \ref{sec:open problem}, we discuss open problems and conjectures.

\section{Preliminaries}\label{sec:Preliminaries}

For dynamical systems $(X,T)$ and $(Y,S)$, a map $f:X\to Y$ is said to be \textit{equivariant} if $f\circ T=S\circ f$. 
A dynamical system $(X, T)$ is called a {\it topological factor} of a system $(Y,S)$ if there exists a continuous and equivariant surjection $\pi: X\to Y$. The map $\pi$ is called a {\it factor map}.

\subsection{Inverse limit of dynamical systems}

Let $\{(X_n, T_n) \}_{n\in \N}$ be a sequence of dynamical systems. Suppose for every pair $m>n$ there exists a factor map $\sigma_{m,n}: X_m\to X_n$ such that for any triple $m>n>l$ the diagram
\begin{equation*}
\xymatrix{
	& (X_m, T_m) \ar[d]^{\sigma_{m,n}}   \ar[ld]_{\sigma_{m,l}}    \\
	(X_l, T_l)  &  \ar[l]^{\sigma_{n,l}} (X_n, T_n) 
}
\end{equation*}
commutes. Set
$$
X=\{(x_n)_{n\in \N}\in \prod_{n\in \N} X_n: \sigma_{m,n}(x_m)=x_n, \forall~m>n \}.
$$
Clearly, the space $X$ is closed in $\prod_{n\in \N} X_n$ and thus compact.
Let $\pi_n:X\to X_n$ be the natural projection map for each $n\in \N$. Define the continuous equivariant map $T:X\to X$ by
$$
T: (x_n)_{n\in \N} \to (T_nx_n)_{n\in \N}.
$$ 
It satisfies that $\sigma_{m,n}\circ \pi_m=\pi_n$ for all $m>n$.
We call the dynamical system $(X,T)$ the {\it inverse limit} of the family $\{(X_n, T_n) \}_{n\in \N}$ via $\sigma=(\sigma_{m,n})_{m,n\in \N, m>n}$ and write 
$$
(X,T)=\varprojlim (X_n,T_n).
$$

\subsection{Mean dimension}
Let $X$ be a compact space. For two finite open covers $\mathcal{A}$ and $\mathcal{B}$ of $X$, we say that the cover $\mathcal{B}$ is \textit{finer} than the cover $\mathcal{A}$, and write $\mathcal{B}\succ \mathcal{A}$, if for every element of $\mathcal{B}$, one can find an element of $\mathcal{A}$ which contains it.  For a finite open cover $\mathcal{A}$  of $X$, we define the quantities 
$$
\text{\rm ord}(\mathcal{A}):=\sup_{x\in X} \sum_{A\in \mathcal{A}} 1_A(x)-1,
$$
and
$$
D(\mathcal{A}):=\min_{\mathcal{B}\succ \mathcal{A}} \text{ord}(\mathcal{B}).
$$
Clearly, if $\mathcal{B}\succ \mathcal{A}$ then $D(\mathcal{B})\ge D(\mathcal{A})$. The \textit{(topological) dimension} of $X$ is defined by
$$
\text{dim}(X):=\sup_{\mathcal{A}} D(\mathcal{A}),
$$
where $\mathcal{A}$ runs over all finite open covers of $X$. For finite open covers $\mathcal{A}$ and $\BB$, we set $\AA \vee \BB:=\{U\cap V: U\in \AA, V\in \BB \}$. It is easy to check that $D(\AA\vee \BB)\le D(\AA)+D(\BB)$.
 
Let $(X, T)$ be a dynamical system. The {\it mean dimension} of $(X, T)$ is defined by
$$
\mdim(X, T)=\sup_{\alpha} \lim_{n\to \infty} \frac{D(\vee_{i=0}^{n-1} T^{-i}\alpha )}{n},
$$
where $\mathcal{A}$ runs over all finite open covers of $X$. The limit above exists is due to the sub-additivity of $D$.

We mention some basic properties of $D$ and mean dimension. Refer to the book \cite{Coo05} for the proofs and further properties. \begin{itemize}
    \item Let $X$ and $Y$ be topological spaces and let $f: X\to Y$ be a continuous map. Let $\AA$ be a finite open cover of $Y$. Then $D(f^{-1}(\AA))\le D(\AA)$.
    \item If $(Y,T)$ is a subsystem of $(X,T)$, i.e. $Y$ is a closed $T$-invariant subspace of $X$, then $\mdim(Y,T)\le \mdim(X,T)$.
    \item $\mdim(X,T^n)=n\cdot \mdim(X,T)$.
\end{itemize}

\section{Coindex of free $\Z_p$ system}\label{sec:Coindex}
We denote by $\Z_p:=\Z/p\Z$ for prime number $p$ through this paper. A pair $(X, T)$ is called a {\it $\Z_p$-system} if $(X,T)$ is a dynamical system and $T$ induces a $\Z_p$-action on $X$. Moreover, the $\Z_p$-system is said to be {\it free} if it has no fixed points, i.e. $Tx\not=x$ for all $x\in X$. Since $p$ is prime, a $\Z_p$-system is free if and only if it is aperiodic, i.e.  $T^nx\not=x$ for all $1\le n\le p-1$ and $x\in X$.

A free $\Z_p$-system $(X, T)$ is called an {\it $E_n\Z_p$-system} if it satisfies the following conditions:
\begin{itemize}
    \item $X$ is an $n$-dimensional finite simplicial complex.
    \item $T:X\to X$ is a simplical map, i.e. it maps every simplex to a simplex affinely.
    \item $X$ is $(n-1)$-connected, i.e. the $k$-th homotopy group $\pi_k(X)=0$ for all $k\le n-1$. 
\end{itemize}
For example, if $\Z_p$ acts freely by $T$ on a finite set $Y$, then $(Y, T)$ is an $E_0\Z_p$-system. Moreover, if $Y_i$ are finite sets and $(Y_i, T_i)$ are free $\Z_p$-systems for $0\le i\le n$, then $(Y_0*Y_1*\cdots *Y_n, T_0*T_1*\cdots *T_n)$ is an $E_n\Z_p$-system. Recall that for  dynamical systems $(X, T)$ and $(Y, S)$, the dynamical systems $(X*Y, T* S)$ is defined by 
$$X*Y:=[0,1]\times X\times Y/\sim ~\text{and}~ T*S(t,x,y)=(t,Tx,Sy)$$ where the equivalence relation $\sim$ is given by
$$
(0, x, y)\sim (0,x, y')~\text{and}~(1, x, y)\sim (1,x', y),
$$
for any $x,x'\in X$ and any $y,y'\in Y$. It is easy to see that 
$$
P_n(X*Y, T* S)=P_n(X, T)*P_n(Y, S), \forall n\in \N.
$$
In particular, if $(X, T)$ and $(Y, S)$ are both aperiodic, then so is $(X*Y, T* S)$.

Due to \cite[Lemma 6.2.2]{matouvsek2003using}, there is an equivariant and continuous map from one $E_n\Z_p$-system to another. Thus, the $E_n\Z_p$-system is ``unique" in the sense that we regard two systems as the same whenever there are equivariant continuous maps between each other.

Following \cite{tsukamoto2020markerproperty}, we define the {\it coindex} of a free $\Z_p$-system $(X,T)$ by 
$$
\coindex_p (X,T):=\max\{n\ge 0: \exists~\text{an equivariant continuous}~ f: E_n\Z_p \to X  \}
$$
We  use the convention that $\coindex_p(X,T)=-1$ for $X=\emptyset$.
Notice that if there exists an equivariant continuous map from $E_m\Z_p$ to $E_n\Z_p$ then $m\le n$ (see \cite[Theorem 6.2.5]{matouvsek2003using}). Thus 
$$
\coindex_p (E_n\Z_p, T)=n.
$$
Moreover, if $(X,T)$ is an $E_n\Z_p$-system, then $\coindex_p(X,T)=n$.

\begin{lem}\label{lem:coprime power}
Let $(X, T)$ be a free $\Z_p$-system. For any integer $n$ coprime with $p$, we have
$$
\coindex_p(X, T^n)=\coindex_p(X,T).
$$
\end{lem}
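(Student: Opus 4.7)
The plan is to exploit the fact that $T$ induces a $\Z_p$-action, so $T^p=\mathrm{id}$, and hence when $n$ is coprime to $p$ the homeomorphism $T^n$ generates the same cyclic subgroup of $\mathrm{Homeo}(X)$ as $T$. The coindex is really an invariant of the $\Z_p$-action itself, and changing the generator should not affect it; the work is just in translating this intuition into the level of equivariant maps from $E_k\Z_p$-systems.

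First I would observe that if $(Y,S)$ is any $E_k\Z_p$-system and $n$ is coprime to $p$, then $(Y,S^n)$ is again an $E_k\Z_p$-system. Indeed, the underlying simplicial complex $Y$ and its connectedness properties are unchanged, $S^n$ is still simplicial (compositions of simplicial maps are simplicial), and the freeness of $S^n$ follows from picking $m$ with $nm\equiv 1\pmod p$: then $(S^n)^m=S^{nm}=S$ (using $S^p=\mathrm{id}$), so a fixed point of $S^n$ would be a fixed point of $S$.

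Next, I would show $\coindex_p(X,T)\le \coindex_p(X,T^n)$. Suppose $\coindex_p(X,T)\ge k$ and fix an equivariant continuous map $f\colon (E_k\Z_p,S)\to(X,T)$, so $f\circ S=T\circ f$. Iterating gives $f\circ S^n=T^n\circ f$, so the same $f$ is an equivariant continuous map from $(E_k\Z_p,S^n)$ to $(X,T^n)$. Since $(E_k\Z_p,S^n)$ is itself an $E_k\Z_p$-system by the previous paragraph, and since by \cite[Lemma 6.2.2]{matouvsek2003using} any $E_k\Z_p$-system admits an equivariant continuous map to $(E_k\Z_p,S^n)$, composing produces an equivariant continuous map from any $E_k\Z_p$-system into $(X,T^n)$. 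Hence $\coindex_p(X,T^n)\ge k$.

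The reverse inequality is then symmetric: choose $m$ with $nm\equiv 1\pmod p$; $m$ is likewise coprime to $p$, so applying the previous step with $T^n$ in place of $T$ and $m$ in place of $n$ yields $\coindex_p(X,T^n)\le\coindex_p(X,(T^n)^m)=\coindex_p(X,T)$, where the last equality uses $T^{nm}=T$. There isn't really a hard step here—the only thing to be careful about is to distinguish between the action and the generator, and to verify that $S^n$ inherits the three defining properties of an $E_k\Z_p$-system, which was handled in the opening paragraph.
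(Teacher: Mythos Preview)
Your proof is correct and follows essentially the same route as the paper: both hinge on the observation that if $(Y,S)$ is an $E_k\Z_p$-system and $\gcd(n,p)=1$ then $(Y,S^n)$ is again an $E_k\Z_p$-system, together with the symmetry via the multiplicative inverse $m$ of $n$ modulo $p$. Your version is in fact slightly more careful, since you avoid the paper's (unneeded) appeal to topological conjugacy of $(Y,S)$ and $(Y,S^n)$; note also that the composition through \cite[Lemma 6.2.2]{matouvsek2003using} in your second step is harmless but redundant, since $(Y,S^n)$ being an $E_k\Z_p$-system already witnesses $\coindex_p(X,T^n)\ge k$ directly.
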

\begin{proof}
It follows directly from the fact that for any integer $n$ coprime with $p$ if $(Y,S)$ is an $E_n\Z_p$-system then $(Y,S^n)$ is not only an $E_n\Z_p$-system but also topologically conjugate to $(Y,S)$.
\end{proof}


The following basic properties of coindex were proved by Tsukamoto, Tsutaya and Yoshinaga. 
\begin{prop}[\cite{tsukamoto2020markerproperty}, Proposition 3.1]\label{prop:basic property of coindex}
Let $(X, T)$ and $(Y, S)$ be free $\Z_p$-systems. Then the following properties hold.
\begin{itemize}
    \item [(1)] If there is an equivariant continuous map $f: X\to Y$ then $\coindex_p(X,T)\le \coindex_p(Y, S)$.
    \item [(2)] The system $(X*Y, T* S)$ is a free $\Z_p$-systems and $\coindex_p(X*Y, T* S)\ge \coindex_p(X,T)+\coindex_p(Y, S)+1$.
\end{itemize}
\end{prop}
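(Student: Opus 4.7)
\textbf{Proof proposal for Proposition \ref{prop:basic property of coindex}.}

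For part (1), the plan is simply to compose equivariant maps. Set $m = \coindex_p(X,T)$; the case $m=-1$ is trivial, so assume $m \geq 0$. By definition there exists an equivariant continuous map $g : E_m\Z_p \to X$. Composing with the equivariant continuous $f : X \to Y$ yields an equivariant continuous map $f \circ g : E_m\Z_p \to Y$, which shows $\coindex_p(Y,S) \geq m$. The case $m = \infty$ works the same way level by level.

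For part (2), I would first verify that $(X*Y, T*S)$ is a free $\Z_p$-system. A fixed point of $T*S$ in $X*Y$ would be represented by some $(t,x,y)$ with $(t,Tx,Sy) \sim (t,x,y)$. If $t \in (0,1)$ the equivalence relation is trivial on that slice and forces $Tx=x$ and $Sy=y$, contradicting freeness of $T$; if $t=0$ the class only depends on $x$, so $Tx=x$ gives a contradiction; and symmetrically for $t=1$. Hence $T*S$ is free.

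For the coindex inequality, let $m = \coindex_p(X,T)$ and $n = \coindex_p(Y,S)$ (assume both are finite and nonnegative; the degenerate cases are handled by the convention $\coindex_p(\emptyset) = -1$). Pick concrete $E_m\Z_p$-systems $A$ and $E_n\Z_p$-systems $B$ together with equivariant continuous maps $f : A \to X$ and $g : B \to Y$ guaranteed by the definition of coindex. The strategy is:
\begin{itemize}
\item Form the join $(A*B, T_A * T_B)$ and check it is an $E_{m+n+1}\Z_p$-system. Simpliciality and the dimension identity $\dim(A*B) = \dim A + \dim B + 1 = m+n+1$ are standard for joins of finite simplicial complexes with simplicial actions. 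Freeness of the join action was just verified above. The key topological input is that the join of an $(m-1)$-connected space with an $(n-1)$-connected space is $(m+n)$-connected, so $A*B$ is indeed $((m+n+1)-1)$-connected.
\item Define $f*g : A*B \to X*Y$ by $(t,a,b) \mapsto (t, f(a), g(b))$. This is well defined on equivalence classes (since both $\sim$ collapse in exactly the same coordinates on source and target) and is equivariant and continuous.
\item By the ``uniqueness'' of $E_{m+n+1}\Z_p$-systems (i.e.\ the existence of an equivariant continuous map from any such system to any other, cited from \cite{matouvsek2003using}), for the reference model $E_{m+n+1}\Z_p$ there is an equivariant continuous map $h : E_{m+n+1}\Z_p \to A*B$. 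Then $(f*g) \circ h$ is an equivariant continuous map $E_{m+n+1}\Z_p \to X*Y$, which proves $\coindex_p(X*Y, T*S) \geq m+n+1$.
\end{itemize}

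The only non-formal step is the connectivity of the join, which is the main obstacle if one does not wish to invoke standard algebraic topology; however, once this is granted the rest is a clean assembly, and in fact the whole argument is exactly parallel to the classical proof that the $\Z_p$-coindex of spaces is additive (up to $+1$) under joins.
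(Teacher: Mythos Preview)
The paper does not supply its own proof of this proposition: it is quoted verbatim from \cite[Proposition~3.1]{tsukamoto2020markerproperty} and immediately followed by the definition of periodic coindex, with no argument given. So there is nothing in the present paper to compare your proposal against.

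That said, your sketch is the standard proof and is correct. A couple of small remarks. First, in part~(2) the final bullet is redundant: once you have shown that $A*B$ is itself an $E_{m+n+1}\Z_p$-system and that $f*g:A*B\to X*Y$ is equivariant and continuous, the definition of coindex (which quantifies over \emph{some} $E_n\Z_p$-system, all being interchangeable by \cite[Lemma~6.2.2]{matouvsek2003using}) already gives $\coindex_p(X*Y)\ge m+n+1$; the extra composition with a ``reference'' model is unnecessary. Second, your phrase ``the degenerate cases are handled by the convention $\coindex_p(\emptyset)=-1$'' is a bit glib: if $X=\emptyset$ then with the paper's join convention $X*Y=\emptyset$ as well, and the inequality would read $-1\ge n$, which is false for nonempty $Y$. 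In the intended setting a free $\Z_p$-system is nonempty (any orbit gives an equivariant map from $E_0\Z_p$, so $\coindex_p\ge 0$), and the $-1$ convention is used only for the \emph{periodic} coindex $\coindexPER_p$ when $P_p(X,T)=\emptyset$; you should say this explicitly rather than leave the impression that the empty case is covered by the same argument.
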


Let $(X,T)$ be a  dynamical system $(X,T)$ without fixed points (i.e. $Tx\not=x$ for all $x\in X$). Recall that $P_n(X,T)$ is the set of $n$-periodic points.
Observe that $(P_p(X, T), T)$ is a free $\Z_p$-system for prime number $p$. We define the \textit{periodic coindex} of $(X,T)$ as 
$$
\coindex_p^{\rm Per}(X,T)=\coindex_p((P_p(X,T)), T),
$$
for prime numbers $p$.
\begin{cor}\label{cor:basic property}
Let $(X, T)$ and $(Y, S)$ be dynamical systems without fixed points. Let $p$ be a prime number. Then the following properties hold.
\begin{itemize}
    \item [(1)] If there is an equivariant continuous map $f: X\to Y$ then $\coindexPER_p(X,T)\le \coindexPER_p(Y, S)$.
    \item [(2)] The system $(X*Y, T* S)$ has no fixed points and $\coindexPER_p(X*Y, T* S)\ge \coindexPER_p(X,T)+\coindexPER_p(Y, S)+1$.
\end{itemize}
\begin{proof}
(1) Notice that $f$ restricted to the set of $p$-periodic points induces an equivariant continuous map from $(P_p(X,T), T)$ to $(P_p(Y,S), S)$. Then we apply Proposition \ref{prop:basic property of coindex} (1).

(2) Observe the fact that $P_p(X*Y, T* S)=P_p(X, T)*P_p(Y, S)$. Then we apply Proposition \ref{prop:basic property of coindex} (2).
\end{proof}
\end{cor}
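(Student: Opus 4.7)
The plan is to reduce both parts of the corollary directly to the already-established Proposition \ref{prop:basic property of coindex} by restricting everything to the set of $p$-periodic points. Recall that $\coindexPER_p$ is defined as $\coindex_p$ of the free $\Z_p$-system on $P_p$, so both statements are really statements about $(P_p(X,T),T)$ and $(P_p(Y,S),S)$.

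For part (1), I would first verify that an equivariant continuous map $f\colon X\to Y$ restricts to a map between the corresponding sets of $p$-periodic points. Indeed, if $T^p x = x$ then $S^p f(x) = f(T^p x) = f(x)$, so $f(P_p(X,T))\subset P_p(Y,S)$. The restriction $f|_{P_p(X,T)}\colon (P_p(X,T),T)\to (P_p(Y,S),S)$ is still continuous and equivariant, and both sides are free $\Z_p$-systems because $(X,T)$ and $(Y,S)$ have no fixed points (so $T$ and $S$ have no fixed points on $P_p$ either, and freeness of the $\Z_p$-action on $p$-periodic points is equivalent to the absence of fixed points when $p$ is prime, as noted in Section \ref{sec:Coindex}). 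Applying Proposition \ref{prop:basic property of coindex}(1) to this restricted map then yields the desired inequality of coindices.

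For part (2), the key input is the identity
\[
P_p(X*Y,T*S) = P_p(X,T) * P_p(Y,S),
\]
which was recorded in the paragraph introducing the join construction. Once this identity is in hand, Proposition \ref{prop:basic property of coindex}(2) applied to the free $\Z_p$-systems $(P_p(X,T),T)$ and $(P_p(Y,S),S)$ immediately gives the desired lower bound on $\coindexPER_p(X*Y,T*S)$. Before invoking that, I need to check separately that $(X*Y,T*S)$ has no fixed points: a point of $X*Y$ is represented by $(t,x,y)$ with $t\in[0,1]$, and if $t\in(0,1)$ then the representative is unique so $(T*S)(t,x,y)=(t,x,y)$ forces $Tx=x$ and $Sy=y$, contradicting the fixed-point-freeness of both factors; at the boundary $t=0$ the equivalence relation collapses the $Y$-coordinate, and $(0,x,y)\sim(0,Tx,Sy)$ still forces $Tx=x$, and symmetrically at $t=1$.

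Neither step presents a genuine difficulty: the only thing that needs a moment's thought is the boundary case for fixed-point-freeness of the join, which is handled by examining the equivalence relation at $t=0$ and $t=1$. Once the reduction to $p$-periodic points is made and the join-compatibility $P_p(X*Y,T*S)=P_p(X,T)*P_p(Y,S)$ is quoted, both statements follow from Proposition \ref{prop:basic property of coindex} with no further work.
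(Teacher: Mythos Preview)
Your proposal is correct and follows essentially the same approach as the paper: restrict to $p$-periodic points and invoke Proposition~\ref{prop:basic property of coindex}, using the identity $P_p(X*Y,T*S)=P_p(X,T)*P_p(Y,S)$ for part~(2). You simply fill in a few more details (the well-definedness of the restriction and the case analysis for fixed-point-freeness of the join) that the paper leaves implicit or records earlier.
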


It follows from Corollary \ref{cor:basic property} (1) that the sequence $(\coindexPER_p(\cdot))_p$ is a topological invariant (i.e. it is invariant under topological conjugacy) in the catalogue of dynamical systems without fixed points. Moreover, if we use the convention that  $\coindexPER_p(\cdot)=-2$ for dynamical systems with fixed point then $(\coindexPER_p(\cdot))_p$ becomes a topological invariant of dynamical systems.

\section{Universal dynamical system without fixed points}\label{sec:Universal dynamical system}
Let $\SS=\R/2\Z$. In what follows, we also regard $\SS$ as the interval $[-1,1]$ such that the endpoints $-1$ and $1$ are identified with each other. Let $\rho$ be the metric on $\SS$ defined by
$$
\rho(x, y)=\min_{n\in \Z} |x-y-2n|.
$$
It is easy to see that the diameter of $\SS$ under the metric $\rho$ is $1$ and $\rho$ is {\it homogeneous}, i.e. $\rho(x+z, y+z)=\rho(x, y)$. 

Let $N$ be a positive integer. Let $\rho_N$ be the metric on $\SS^N$ defined by
$$
\rho_N(x, y)=\max_{1\le i\le N} \rho(x^i, y^i),
$$
where $x=(x^i)_{1\le i\le N}$, $y=(y^i)_{1\le i\le N}\in \SS^N$. Then the diameter of $\SS^N$ under the metric $\rho_N$ is $1$. Notice that $\SS^N$ is an abelian group and $\rho_N(x+z, y+z)=\rho_N(x, y)$ for any $x,y,z\in \SS^N$ (because $\rho(x^i+z^i, y^i+z^i)=\rho(x^i, y^i)$ as we have mentioned before). The product space $(\SS^N)^\Z$ is compact  and metrizable under the product topology and the metric $d(x, y)=\sum_{n\in \Z} \frac{1}{2^n} \rho_N(x_n, y_n)$ where $x=(x_n)_{n\in \N}$ and $y=(y_n)_{n\in \N}$.
Let $\sigma: (\SS^N)^\Z \to (\SS^N)^\Z$ be the shift map, i.e. $\sigma: (x_n)_{n\in \Z} \to (x_{n+1})_{n\in \Z}$. In this paper, we always use $\sigma$ to denote the shift map on $(\SS^N)^\Z$ and as well as its subshift.


Tsukamoto, Tsutaya and Yoshinaga \cite{tsukamoto2020markerproperty} studied the universal property of certain subsystem in Hilbert cube $([0,1]^N)^\Z$.  Notice that $[0,1]$ can be embedded into $\SS$ by $x\mapsto \frac{1}{2}x$; conversely, $\SS$ can be embedded into $[0,1]^2=\{x+yi: x,y\in [0,1]\}$ by $x\mapsto \frac{1}{2}e^{\pi i x}+e^{\frac{1}{2}+\frac{1}{2}i}$.  
It is reasonable to expect that some similar universal property is valid for certain subshifts of $(\SS^N)^\Z$. We investigate such universal property in this section. 

A continuous map $f$ from a metric space $(X,d)$ to a space $Y$ is called an {\it $\epsilon$-embedding} if $f(x)=f(y)$ implies $d(x, y)<\epsilon$.
\begin{lem}\label{lem:epsilon embedding}
Let $\epsilon>0$. Let $(X, d)$ be a compact metric space. There exists an integer $N>0$ and an $\epsilon$-embedding from $X$ to $\SS^N$.
\end{lem}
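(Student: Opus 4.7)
The plan is a standard finite-dimensional approximation argument via a partition-of-unity-style construction, adapted so that the target is $\SS^N$ rather than $[0,1]^N$. First, I would use compactness of $X$ to choose a finite open cover $\UU = \{U_1, \ldots, U_N\}$ with $\diam(U_i) < \epsilon$ for every $i$. Setting $\psi_i(x) = d(x, X \setminus U_i)$ produces continuous functions $\psi_i : X \to [0, \infty)$ with the key support property $\psi_i(x) > 0$ if and only if $x \in U_i$. Dividing by a sufficiently large common constant $c > 0$, I replace each $\psi_i$ by $\phi_i := c^{-1} \psi_i$ so that every $\phi_i$ takes values in $[0, 1]$ while retaining the same zero set.

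Next I would observe that the inclusion $[0, 1] \hookrightarrow \SS$ is an isometric embedding: for $s, t \in [0, 1]$ one has $|s - t| \leq 1$ while $|s - t \pm 2| \geq 1$, so
$$\rho(s, t) = \min_{n \in \Z} |s - t - 2n| = |s - t|.$$
Consequently, the coordinatewise map
$$f : X \longrightarrow \SS^N, \qquad f(x) = \bigl(\phi_1(x), \ldots, \phi_N(x)\bigr),$$
is well-defined and continuous, since the max-metric $\rho_N$ on $\SS^N$ restricts to the sup-metric on the copy of $[0,1]^N$ sitting inside $\SS^N$.

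To verify the $\epsilon$-embedding property, suppose $f(x) = f(y)$, so that $\phi_i(x) = \phi_i(y)$ for every $i$. Because $\UU$ covers $X$, there is some $i$ with $x \in U_i$, hence $\phi_i(x) > 0$, and therefore $\phi_i(y) > 0$, forcing $y \in U_i$ as well. Since $\diam(U_i) < \epsilon$, we conclude $d(x, y) < \epsilon$, as required. I do not anticipate any real obstacle: the argument is the familiar cover-to-coordinate-functions trick, and the only novelty is the brief observation that $\SS$ contains an isometric copy of $[0, 1]$, which lets us replace the standard cube target by $\SS^N$ for free.
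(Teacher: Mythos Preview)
Your proof is correct and follows essentially the same approach as the paper: choose a finite cover of $X$ by sets of diameter $<\epsilon$, map into $\SS^N$ via distance-type coordinate functions (the paper uses $d(\cdot,x_i)$ for ball centers $x_i$, you use $d(\cdot,X\setminus U_i)$), and observe that $[0,1]$ sits injectively in $\SS$ so that $f(x)=f(y)$ forces $x,y$ into a common small cover set. The only cosmetic difference is that the paper rescales by assuming $\diam(X)\le 1/4$ at the outset rather than dividing by a constant $c$.
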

\begin{proof}
Without loss of generality, we assume diam$(X)\le 1/4$. Pick an open cover $\{B(x_i, \frac{\epsilon}{2})\}_{1\le i\le N}$ of $X$ where $B(x, \frac{\epsilon}{2})$ is the open ball centered at $x$ of radius $\frac{\epsilon}{2}$. Define a continuous map $f: X \to \SS^N$ by
$$
f(x)=(d(x, x_1), d(x, x_2), \dots, d(x, x_N)).
$$
For $x,y\in X$ with $f(x)=f(y)$, supposing $x\in B(x_i, \frac{\epsilon}{2})$, we obtain that $d(y, x_i)=d(x,x_i)<\epsilon/2$ and thus $d(x,y)<\epsilon$. This implies that $f$ is $\epsilon$-embedding. 
\end{proof}

For any positive integers $m, N$ and any number $\delta>0$, we define a subsystem $(\XX(N, m, \delta), \sigma)$ of $((\SS^N)^\Z, \sigma)$ by
$$
\XX(N, m, \delta):=\{(x_n)_{n\in \Z}\in (\SS^N)^\Z: \rho_N(x_n, x_{n+m})\ge \delta, ~\forall n\in \Z \}.
$$
It is clear that $(\XX(N, m, \delta), \sigma)$ has no fixed points. The universal property of $\XX(N, 1, \delta)$ is studied in the following lemma.

\begin{lem}\label{lem:in N,1,delta}
Let $(X,T)$ be a dynamical system without fixed points. Then there exists an positive integer $N$, an $\delta>0$ and an equivariant continuous map from $X$ to $\XX(N, 1, \delta)$.
\end{lem}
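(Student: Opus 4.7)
The natural construction of an equivariant map $X\to (\SS^N)^\Z$ is to choose some continuous $\phi:X\to \SS^N$ and set $F(x):=(\phi(T^nx))_{n\in\Z}$. Then $F$ automatically intertwines $T$ and $\sigma$. The task reduces to selecting $\phi$ so that $\rho_N(\phi(x),\phi(Tx))\ge \delta$ holds for every $x\in X$ and some uniform $\delta>0$; then the image of $F$ lies in $\XX(N,1,\delta)$, as desired.

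The plan is as follows. First, since $X$ is compact and $(X,T)$ has no fixed points, the continuous function $x\mapsto d(x,Tx)$ attains a positive minimum $\epsilon>0$, so $d(x,Tx)\ge \epsilon$ for all $x\in X$. Apply Lemma \ref{lem:epsilon embedding} with this $\epsilon$ to obtain an integer $N>0$ and an $\epsilon$-embedding $\phi:X\to \SS^N$.

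The key step is to upgrade the pointwise separation property of $\phi$ into a uniform lower bound on pairs at distance $\ge\epsilon$. Consider the compact set
$$
K:=\{(x,y)\in X\times X: d(x,y)\ge \epsilon \},
$$
and the continuous function $(x,y)\mapsto \rho_N(\phi(x),\phi(y))$ on it. By the $\epsilon$-embedding property of $\phi$, this function is strictly positive on $K$; by compactness it achieves a positive minimum $\delta>0$. In particular, since $(x,Tx)\in K$ for every $x\in X$, we get $\rho_N(\phi(x),\phi(Tx))\ge \delta$ uniformly.

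Finally, define $F:X\to (\SS^N)^\Z$ by $F(x):=(\phi(T^nx))_{n\in\Z}$. Continuity and equivariance ($F\circ T=\sigma\circ F$) are immediate, and the uniform estimate above gives $\rho_N(\phi(T^nx),\phi(T^{n+1}x))\ge\delta$ for every $n$ and $x$, so $F(X)\subset \XX(N,1,\delta)$. The only place where a subtlety could arise is the passage from ``$\phi(x)\ne \phi(y)$ whenever $d(x,y)\ge\epsilon$'' to the uniform lower bound $\delta$, and this is handled cleanly by the compactness argument above.
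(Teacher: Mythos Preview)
Your proof is correct and follows essentially the same approach as the paper: choose an $\epsilon$-embedding $\phi:X\to\SS^N$ with $\epsilon$ at most the minimum of $d(x,Tx)$, then use compactness to extract a uniform $\delta>0$ such that $d(x,y)\ge\epsilon$ forces $\rho_N(\phi(x),\phi(y))\ge\delta$, and conclude that $x\mapsto(\phi(T^nx))_{n\in\Z}$ lands in $\XX(N,1,\delta)$. The only cosmetic difference is that the paper takes $\epsilon$ strictly below the infimum and phrases the compactness step as ``$\rho_N(\phi(x),\phi(y))<\delta \Rightarrow d(x,y)<\epsilon$'' rather than via your set $K$, but the argument is the same.
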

\begin{proof}
Let $d$ be a metric on $X$ which is compatible with its topology. Since $(X, T)$ has no fixed point, we have $\inf_{x\in X}d(x, Tx)>0$. Pick $0<\epsilon<\inf_{x\in X}d(x, Tx)$. By Lemma \ref{lem:epsilon embedding}, there is an integer $N$ and an $\epsilon$-embedding $f$ from $X$ to $\SS^N$. Define an equivariant and continuous map $g: X\to (\SS^N)^\Z$ by
$$
x\mapsto (f(T^nx))_{n\in \Z}.
$$
Since $X$ is compact and $f$ is a $\epsilon$-embedding, there is an $\delta>0$ such that $|f(x)-f(y)|<\delta$ implies $d(x,y)<\epsilon$. Then we get that $$|f(x)-f(Tx)|\ge \delta$$ for all $x\in X$. It follows that the image of $X$ under $g$ is contained in $\XX(N, 1, \delta)$. This completes the proof.
\end{proof}

Even though we can not replace $\XX(N, 1, \delta)$ in Lemma \ref{lem:in N,1,delta} by $\XX(N, m, \delta)$ for $m\ge 2$, the following lemma tells us that $\XX(N, m, \delta)$ is the same as $\XX(N, 1, \delta)$ in terms of periodic coindex for large prime numbers. 

\begin{lem}\label{lem:m are the same coindex}
Let $0<\delta<1$.
 For any positive integer $m$ and any prime number $p>m$, we have
$$
\coindexPER_p (\XX(N,m, \delta), \sigma)= \coindexPER_p (\XX(N,1, \delta), \sigma)\ge 0.
$$
\end{lem}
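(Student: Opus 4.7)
My plan is to use a decimation permutation on $p$-periodic sequences that swaps the ``step-$m$'' distance constraint with the ``step-$1$'' one. Since $p > m$ is prime, $m$ is invertible modulo $p$; fix $k \in \{1,\dots,p-1\}$ with $km \equiv 1 \pmod{p}$.

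The first step is to introduce, on the set of $p$-periodic sequences in $(\SS^N)^\Z$, the map
$$
\phi((x_n))_j := x_{kj},
$$
which is well-defined because $(x_n)$ is $p$-periodic, and is a homeomorphism with inverse $\phi^{-1}((y_n))_j = y_{mj}$. A direct computation yields $\sigma \circ \phi = \phi \circ \sigma^k$. Using $km \equiv 1 \pmod{p}$ together with $p$-periodicity, for $x \in P_p(\XX(N,1,\delta), \sigma)$,
$$
\rho_N(\phi(x)_j, \phi(x)_{j+m}) = \rho_N(x_{kj}, x_{kj+km}) = \rho_N(x_{kj}, x_{kj+1}) \geq \delta,
$$
so $\phi(x) \in P_p(\XX(N,m,\delta), \sigma)$; the reverse inclusion follows by the same argument applied to $\phi^{-1}$. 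Hence $\phi$ restricts to a homeomorphism between $P_p(\XX(N,1,\delta), \sigma)$ and $P_p(\XX(N,m,\delta), \sigma)$ intertwining $\sigma^k$ on the source with $\sigma$ on the target.

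Applying Proposition \ref{prop:basic property of coindex}(1) to $\phi$ and $\phi^{-1}$ then gives
$$
\coindex_p(P_p(\XX(N,m,\delta), \sigma), \sigma) = \coindex_p(P_p(\XX(N,1,\delta), \sigma), \sigma^k),
$$
and Lemma \ref{lem:coprime power}, which applies because $\gcd(k,p) = 1$, reduces the right-hand side to $\coindex_p(P_p(\XX(N,1,\delta), \sigma), \sigma)$. Combining these two identities delivers the claimed equality.

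For the bound $\ge 0$, it suffices to exhibit a single free $\Z_p$-orbit in $P_p(\XX(N,1,\delta), \sigma)$: I would choose $p$ distinct points $y_0, \dots, y_{p-1} \in \SS^N$ with $\rho_N(y_i, y_{(i+1) \bmod p}) \geq \delta$ (for instance, using an arithmetic progression along a single $\SS$-coordinate when $p$ is large relative to $1/(1-\delta)$, and otherwise spreading the points across several coordinates of $\SS^N$), and set $x_n := y_{n \bmod p}$; the $\sigma$-orbit of this point provides an equivariant continuous map $E_0\Z_p \to P_p(\XX(N,1,\delta), \sigma)$. The step that requires the most care is checking that $\phi$ swaps the two constraints exactly, which hinges on the identity $km \equiv 1 \pmod{p}$ and hence on the hypothesis $p > m$.
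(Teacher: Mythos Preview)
Your proof is correct and follows essentially the same route as the paper: both build a topological conjugacy between the $p$-periodic points of $\XX(N,m,\delta)$ and of $\XX(N,1,\delta)$ via the reindexing map $(x_i)_{i\in\Z_p}\mapsto(x_{ji})_{i\in\Z_p}$ (the paper uses $j=m$ and lands with $\sigma^m$ on the step-$m$ side, you use $j=k$ and land with $\sigma^k$ on the step-$1$ side), and then invoke Lemma~\ref{lem:coprime power}. For the bound $\ge 0$ the paper simply asserts that $P_p(\XX(N,m,\delta))\neq\emptyset$, so your sketch of an explicit free orbit is in the same spirit and in fact more explicit.
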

\begin{proof}
Fix a positive integer $m$ and a prime number $p>m$. 
Notice that the set of $p$-periodic points
$$
P_p(\XX(N, m, \delta))=\{(x_i)_{i\in \Z_p}\in (\SS^N)^{\Z_p}: \rho_N(x_n, x_{n+m})\ge \delta, ~\forall n\in \Z_p \},
$$
which is not empty.
Define $f_j: (\SS^N)^{\Z_p} \to (\SS^N)^{\Z_p}$ by $(x_i)_{i\in \Z_p} \mapsto (x_{ij})_{i\in \Z_p}.$
Let $1\le k\le p-1$ be the inverse of $m$ in $\Z_p$, i.e. $km\equiv 1 \mod p$. Then the following diagram commutes:
\begin{equation*}
\xymatrix{
	& P_p(\XX(N, m, \delta), \sigma) \ar[r]^{\sigma^m} \ar@<-.5ex>[d]^{f_m}  & P_p(\XX(N, m, \delta), \sigma)   \ar@<-.5ex>[d]_{f_k}     \\
	& P_p(\XX(N, 1, \delta), \sigma) \ar[r]^{\sigma} \ar[u]^{f_k}  & P_p(\XX(N, 1, \delta), \sigma)  \ar[u]_{f_m}
}
\end{equation*}
It means that the system $(P_p(\XX(N, m, \delta), \sigma), \sigma^m) $ is topologically conjugate to $(P_p(\XX(N, 1, \delta), \sigma), \sigma) $. Then by Lemma \ref{lem:coprime power}, we obtain that
\begin{equation*}
    \begin{split}
        \coindexPER_p(\XX(N, 1, \delta), \sigma)
        &=\coindex_p(P_p(\XX(N, m, \delta), \sigma), \sigma^m)\\
        &=\coindex_p(P_p(\XX(N, m, \delta), \sigma), \sigma)\\
        &=\coindexPER_p(\XX(N, m, \delta), \sigma).
    \end{split}
\end{equation*}
This completes the proof.
\end{proof}

\begin{lem}\label{lem:dynamical system with vashing coindex}
There exists a dynamical system $(Y, S)$ satisfying that $P_1(Y,S)=\emptyset$ and $0<\sharp ( P_n(Y, S) )<+\infty$ for $n\ge 2$. Moreover, we have $\coindexPER_p(Y,S)=0$ for all prime number $p$.
\end{lem}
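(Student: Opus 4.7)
The plan is to realize $(Y,S)$ as a subshift of $\{0,1\}^\Z$, built as the closure of a countable family of prime-period orbits accumulating on a single period-$2$ orbit. Choosing the accumulation orbit to have period $2$ (rather than $1$) is what keeps $P_1(Y,S)$ empty, and concentrating on prime periods is natural because, once there is at least one periodic orbit of period $p$ for every prime $p$, divisibility automatically forces $P_n(Y,S)$ to be nonempty for all $n\geq 2$.

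Concretely, let $y_\infty\in\{0,1\}^\Z$ be the alternating sequence $y_\infty[k]=k\bmod 2$. For each odd prime $p$, let $b_p=(01)^{(p-1)/2}0\in\{0,1\}^p$ and take $y_p$ to be the shift of the periodic extension $\overline{b_p}$ whose ``defect'' (the pair of adjacent zeros arising where successive copies of $b_p$ meet) is centered near $\pm p/2$; with this choice $y_p$ coincides with a shift of $y_\infty$ on a window of length $p-O(1)$ about the origin, hence $y_p\to y_\infty$ (up to a bounded further shift) in the product topology, and since $p$ is prime and $y_p$ is non-constant its minimal period is exactly $p$. Set $y_2:=y_\infty$ and define
\[
Y\;:=\;\overline{\bigcup_{p\text{ prime}} O_\sigma(y_p)}\;\subseteq\; \{0,1\}^\Z,\qquad S\;:=\;\sigma|_Y,
\]
where $O_\sigma(y)$ denotes the $\sigma$-orbit of $y$; then $(Y,S)$ is a compact shift-invariant system.

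The verification splits into three points. (i) $P_1(Y,S)=\emptyset$: every point of $Y$ contains both symbols in every window of length $\geq 2$ (the property holds for each $\sigma^k y_p$ and is stable under limits), ruling out the two fixed points $\overline{0},\overline{1}$ of the full shift. (ii) I claim that the only periodic points of $Y$ are those in the orbits $O_\sigma(y_p)$ for primes $p$, whence
\[
P_n(Y,S)\;=\;\bigcup_{\substack{p\mid n\\ p\text{ prime}}} O_\sigma(y_p),
\]
which is finite and nonempty for every $n\geq 2$. (iii) For each prime $p$, $P_p(Y,S)=O_\sigma(y_p)$ is a single free $\Z_p$-orbit of size $p$, i.e.\ an $E_0\Z_p$-system, so by the remark right after the definition of coindex one gets $\coindexPER_p(Y,S)=\coindex_p(O_\sigma(y_p),S)=0$.

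The main obstacle is step (ii): one must rule out the appearance of any ``surprise'' periodic orbit in the closure. The key observation is that consecutive defects of $y_{p_j}$ are spaced $p_j\to\infty$ apart, so any limit $x=\lim_j \sigma^{k_j}y_{p_j}$ with $p_j\to\infty$ can contain at most one defect; if such an $x$ were $d$-periodic and had a defect, it would have to carry infinitely many defects at spacing $d$, which is impossible. Hence every periodic limit is defect-free and therefore coincides globally with a shift of $y_\infty$, giving the desired classification.
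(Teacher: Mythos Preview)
Your argument is correct in its architecture, but there is one small slip and the route is considerably more elaborate than the paper's.

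\medskip

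\noindent\textbf{The slip.} In step (i) you assert that every point of $Y$ contains both symbols in every window of length~$\ge 2$. That is false as stated: for an odd prime $p$ the block $b_p=(01)^{(p-1)/2}0$ produces a $00$ at the junction, so a length-$2$ window can be $00$. What \emph{is} true (and suffices) is that no point of $Y$ contains $11$ or $000$; equivalently, every window of length~$\ge 3$ contains both symbols. This still rules out $\overline{0}$ and $\overline{1}$, so $P_1(Y,S)=\emptyset$.

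\medskip

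\noindent\textbf{Comparison with the paper.} The paper bypasses your whole limit-orbit construction by taking the subshift of finite type
\[
Y\;=\;\bigl\{x\in\{0,1\}^\Z:\ \text{neither }000\text{ nor }111\text{ occurs in }x\bigr\},
\]
with $S$ the shift. Then $P_1=\emptyset$ because the only fixed points of the full shift are $\overline{0},\overline{1}$, both forbidden; $P_n$ is automatically finite (subset of $\{0,1\}^{\Z_n}$) and nonempty (e.g.\ a periodic word of the form $(01)^{\lfloor n/2\rfloor}$ with a single extra symbol when $n$ is odd avoids $000$ and $111$); and the last clause follows because any nonempty finite free $\Z_p$-set is an $E_0\Z_p$-system. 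Your construction instead pins down $P_p$ to a \emph{single} $\Z_p$-orbit, at the cost of having to analyse the closure and argue that no extraneous periodic points appear. The defect-counting argument you give for that step is sound: limits along $p_j\to\infty$ can carry at most one $00$, hence a periodic limit is defect-free and must lie in $O_\sigma(y_\infty)$. Both approaches yield $\coindexPER_p=0$; the paper's is shorter because it does not need to control the closure.
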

\begin{proof}
Let $Y:=\{ (x_n)_{n\in \N}\in \{0,1\}^\Z: x_nx_{n+1}x_{n+2}\not= 000 ~\text{or}~111\}$. Let $S: Y\to Y$ be the shift. Then it is easy to check that $(Y,S)$ has no fixed points and $0<\sharp P_n(Y, S)<+\infty$ for $n\ge 2$.
Since $P_n(Y, S)$ is non-empty and finite for $n\ge 2$, we have $\coindexPER_p(Y,S)=0$ for all prime number $p$.
\end{proof}
See another example of above lemma in \cite[Lemma 4.1]{tsukamoto2020markerproperty}
\begin{prop}\label{prop:universal coindex}
Let $(X,T)$ be a dynamical system without fixed points. Then there exists a positive integer $N$ and a positive number $\delta$ such that 
$$
\coindexPER_p(\XX(N, 1, \delta), \sigma)\ge \coindexPER_p(X,T)+1,
$$
for all prime numbers $p$.
\end{prop}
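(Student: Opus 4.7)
The plan is to combine the universal embedding of Lemma \ref{lem:in N,1,delta} with a join trick, using the auxiliary system $(Y,S)$ furnished by Lemma \ref{lem:dynamical system with vashing coindex} as a ``dimension booster'' whose own periodic coindex is $0$ but which still contributes $+1$ through the join inequality of Corollary \ref{cor:basic property}(2).

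First I would fix the system $(Y,S)$ from Lemma \ref{lem:dynamical system with vashing coindex}, which has no fixed points and satisfies $\coindexPER_p(Y,S)=0$ for every prime $p$. Then I would form the join $(X*Y, T*S)$. Since $P_1(X,T)=\emptyset$ and $P_1(Y,S)=\emptyset$, the identity $P_1(X*Y, T*S)=P_1(X,T)*P_1(Y,S)$ recorded in Section \ref{sec:Coindex} shows that $(X*Y, T*S)$ is again a dynamical system without fixed points, so it lies in the class to which Lemma \ref{lem:in N,1,delta} and Corollary \ref{cor:basic property} apply.

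Next I would apply Lemma \ref{lem:in N,1,delta} to the compact dynamical system $(X*Y, T*S)$ to produce a positive integer $N$, a number $\delta>0$, and an equivariant continuous map
\[
  \Phi : X*Y \longrightarrow \XX(N,1,\delta).
\]
Crucially, $N$ and $\delta$ depend only on $(X*Y,T*S)$ and not on any prime $p$. Now, for each prime $p$, Corollary \ref{cor:basic property}(1) applied to $\Phi$ gives
\[
  \coindexPER_p(X*Y, T*S) \;\le\; \coindexPER_p(\XX(N,1,\delta), \sigma),
\]
while Corollary \ref{cor:basic property}(2) applied to $(X,T)$ and $(Y,S)$ gives
\[
  \coindexPER_p(X*Y, T*S) \;\ge\; \coindexPER_p(X,T) + \coindexPER_p(Y,S) + 1 \;=\; \coindexPER_p(X,T) + 1.
\]
Chaining these two inequalities yields the desired bound $\coindexPER_p(\XX(N,1,\delta),\sigma) \ge \coindexPER_p(X,T)+1$ for every prime $p$.

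I do not expect a real obstacle here: all the machinery has been set up in the previous lemmas, and the argument is essentially a bookkeeping exercise. The one point that requires a moment of care is that Lemma \ref{lem:in N,1,delta} must be applied to $X*Y$ rather than to $X$ itself, so that the $+1$ contribution from the join is captured before the embedding into $\XX(N,1,\delta)$; the roles of $(Y,S)$ (killing the need to know $\coindexPER_p(X,T)$ exactly) and of $\Phi$ (transferring the lower bound to the universal subshift) must be kept distinct. Beyond this, one should just check that the constants $N$ and $\delta$ produced by Lemma \ref{lem:in N,1,delta} are genuinely uniform in $p$, which is immediate from the statement of that lemma.
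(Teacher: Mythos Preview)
Your proposal is correct and follows essentially the same route as the paper: take the join of $(X,T)$ with the auxiliary system $(Y,S)$ from Lemma \ref{lem:dynamical system with vashing coindex}, apply Lemma \ref{lem:in N,1,delta} to $X*Y$ to obtain $N,\delta$ and an equivariant map into $\XX(N,1,\delta)$, and then combine Corollary \ref{cor:basic property}(1) and (2) exactly as you describe. The paper's proof is identical up to the order of presentation.
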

\begin{proof}
By Lemma \ref{lem:dynamical system with vashing coindex}, we pick a dynamical system $(Y, S)$ without fixed points satisfying that $\coindexPER_p(Y,S)=0$ for all prime number $p$. Then by Corollary \ref{cor:basic property} (2), we have
\begin{equation}\label{eq:1}
\begin{split}
    \coindexPER_p(X*Y, T*S)
    &\ge \coindexPER_p(X, T)+\coindexPER_p(Y, S)+1\\
    &=\coindexPER_p(X, T)+1,
\end{split}
\end{equation}
for all prime number $p$.
On the other hand, by Lemma \ref{lem:in N,1,delta}, there exists a dynamical system $(\XX(N, 1, \delta), \sigma)$ and an equivariant continuous map $$f: (X*Y, T*S) \to (\XX(N, 1, \delta), \sigma).$$
By Corollary \ref{cor:basic property} (1), we have
\begin{equation}\label{eq:2}
    \coindexPER_p(X*Y, T*S)\le \coindexPER_p(\XX(N, 1, \delta), \sigma),
\end{equation}
for all prime number $p$.
Combing \eqref{eq:2} with \eqref{eq:1}, we conclude that 
$$
\coindexPER_p(\XX(N, 1, \delta), \sigma)\ge \coindexPER_p(X,T)+1,
$$
for all prime number $p$.
\end{proof}

\section{Finite mean dimension}\label{sex:Finite mean dimension}

In this section, we show the existence of aperiodic dynamical systems with finite mean dimension which do not have the marker property. We state our result as follows.
\begin{thm}\label{thm:finite mean dimension}
There exists an aperiodic dynamical system with finite mean dimension which does not have the marker property. 
\end{thm}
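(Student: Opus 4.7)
The plan is to realize the desired system as an inverse limit inside the universal family of Section \ref{sec:Universal dynamical system}. I first apply Proposition \ref{prop:universal coindex} to the concrete system $(Y,S)$ supplied by Lemma \ref{lem:dynamical system with vashing coindex} to fix an integer $N \in \N$ and a number $\delta \in (0,1)$ with $\coindexPER_p(\XX(N,1,\delta),\sigma) \ge 1$ for every prime $p$; by Lemma \ref{lem:m are the same coindex}, this lower bound is inherited by every $\XX(N,m,\delta)$ for primes $p > m$. Since each member of this family sits in $(\SS^N)^\Z$, its mean dimension is uniformly bounded by $N$.

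Next I take $m_n = n!$ and $X_n = \XX(N,m_n,\delta)$. The key combinatorial observation is that whenever $p \mid m_n$ the system $X_n$ has no $p$-periodic point, since $\sigma^p x = x$ would force $\rho_N(x_0,x_{m_n}) = 0 < \delta$. Consequently the smallest period realized in $X_n$ is at least the smallest prime exceeding $n$, which tends to infinity. The first significant technical step is to construct compatible factor maps $\sigma_{m,n} : X_m \to X_n$ for $m > n$, possibly after replacing each $X_n$ by a carefully chosen subsystem or modification that still lies in $(\SS^N)^\Z$ and retains the coindex lower bound, so that the inverse limit $(X,T) := \varprojlim(X_n,T_n)$ carries a continuous left inverse $\gamma_m : X_m \to X$ of each natural projection $\pi_m : X \to X_m$.

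With the inverse system in place, aperiodicity and finite mean dimension follow from general principles. Any $p$-periodic point of $X$ projects to a $p$-periodic point in every $X_n$, which is impossible once $p \mid m_n$; hence $(X,T)$ is aperiodic. For the mean dimension, the inclusion $X_n \subset (\SS^N)^\Z$ yields $\mdim(X_n,T_n) \le N$, and Proposition \ref{prop:mean dimension of inverse limit} then gives $\mdim(X,T) \le \sup_n \mdim(X_n,T_n) \le N < \infty$.

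The main obstacle, and where I expect the bulk of the work to lie, is showing that $(X,T)$ fails the marker property. My plan is to argue by contradiction via Lemma \ref{lem:to Z}, the inverse-limit analogue of \cite[Lemma 6.1]{tsukamoto2020markerproperty}; the sections $\gamma_m$ are introduced precisely because that analogue fails for arbitrary inverse limits. If $(X,T)$ had the marker property, Lemma \ref{lem:to Z}, applied together with $\gamma_m$, should produce, for some index $m$ and some prime $p > m_m$, an equivariant continuous map from the $\Z_p$-system $P_p(X_m,T_m)$ into a free $\Z_p$-system of coindex $0$. Corollary \ref{cor:basic property}(1) would then force $\coindexPER_p(X_m,T_m) \le 0$, contradicting the lower bound $\coindexPER_p(X_m,T_m) \ge 1$ guaranteed by Lemma \ref{lem:m are the same coindex}. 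Setting up this reduction carefully—in particular verifying that the continuous (but generally not equivariant) section $\gamma_m$ really does transport the marker structure into the desired obstruction—is the heart of the technical work.
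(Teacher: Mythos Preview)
Your overall architecture matches the paper: take $X_n=\XX(N,n!,\delta)$, build explicit factor maps $\theta_{m,m-1}:X_m\to X_{m-1}$ together with continuous (non-equivariant) right inverses $\eta_{m-1,m}$, pass to the inverse limit, and use the sections $\gamma_m$ to run Lemma~\ref{lem:to Z}. Your aperiodicity and mean-dimension arguments are fine.

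There is, however, a genuine gap in the contradiction step. You apply Proposition~\ref{prop:universal coindex} to the coindex-zero system of Lemma~\ref{lem:dynamical system with vashing coindex} and deduce only $\coindexPER_p(\XX(N,1,\delta),\sigma)\ge 1$. To close the argument you then need the marker property to yield an equivariant map from $P_p(X_m,T_m)$ into a free $\Z_p$-system of coindex~$0$. But that is not what Lemma~\ref{lem:to Z} supplies: combining Lemma~\ref{lem:to Y} with Lemma~\ref{lem:to Z} gives an equivariant map $X_M\to\ZZ$, and nothing in the paper asserts or proves that $\coindexPER_p(\ZZ,\sigma)=0$. The paper resolves this by applying Proposition~\ref{prop:universal coindex} to $(\ZZ,\sigma)$ itself, which yields the sharper inequality
\[
\coindexPER_p(\XX(N,1,\delta),\sigma)\ \ge\ \coindexPER_p(\ZZ,\sigma)+1
\]
for every prime $p$ (this is equation~\eqref{eq:main}). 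The contradiction then comes from the chain $\coindexPER_p(X_1,T_1)=\coindexPER_p(X_M,T_M)\le\coindexPER_p(\ZZ,\sigma)$ for large primes $p$, against~\eqref{eq:main}. So the system to which you feed Proposition~\ref{prop:universal coindex} must be $\ZZ$, the specific target produced by Lemma~\ref{lem:to Z}, and not an arbitrary coindex-zero witness.

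A secondary point: your sentence ``possibly after replacing each $X_n$ by a carefully chosen subsystem or modification'' is too vague for what is in fact the technical core of Section~\ref{sex:Finite mean dimension}. The factor maps $\theta_{m,m-1}$ are given by the explicit telescoping sum $(x_k)\mapsto\bigl(\sum_{i=0}^{m-1}x_{k+i\cdot q(m-1)}\bigr)$, which uses the abelian group structure of $\SS^N$ and the homogeneity of $\rho_N$ in an essential way; the right inverses $\eta_{m-1,m}$ (Lemmas~\ref{lem:image of eta} and~\ref{lem:identity}) are similarly explicit. No passage to subsystems is needed, and the $X_n$ should not be modified.
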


Recall that the diameter of $\SS$ is $1$. Define a subsystem of $(\SS^\Z, \sigma)$ by 
$$
\ZZ:=\{(x_n)_{n\in \Z} \in \SS^\Z : \forall n\in \Z, ~\text{either}~\rho(x_{n-1}, x_{n})\ge \frac{1}{2} ~\text{or}~ \rho(x_{n}, x_{n+1})\ge \frac{1}{2} \}.
$$
Obviously,  the dynamical system $(\ZZ, \sigma)$ has no fixed points and $P_p(\ZZ, \sigma)\not=\emptyset$ for all prime numbers $p$. By Proposition \ref{prop:universal coindex}, there exists a positive integer $N$, an $\delta>0$ such that for all prime numbers $p$, 
\begin{equation}\label{eq:main}
    \coindexPER_p(\XX(N, 1, \delta), \sigma)\ge \coindexPER_p(\ZZ, \sigma)+1.
\end{equation}
Fix such $N$ and $\delta$ through the whole section.

Recall that $\XX(N, k, \delta)=\{(x_n)_{n\in \Z}\in (\SS^N)^\Z: \rho_N(x_n, x_{n+k})\ge \delta, ~\forall n\in \Z \}.$ Define $$q(m)=\prod_{n=1}^{m}n$$ to be the product of first $m$ positive integers for $m\ge 1$. Obviously, we have $m\cdot q(m-1)=q(m)$ for $m\ge 2$.
Let $$X_m:=\XX(N, q(m), \delta)~\text{for}~ m\ge 1.$$  Here and below, we denote by $(X_m, T_m):=(X_m, \sigma)$ for convenience. For $m>1$, we define an equivariant continuous map $\theta_{m,m-1}: X_m \to (\SS^N)^\Z$ by
$$
(x_{k})_{k\in \Z} \mapsto \left(\sum_{i=0}^{m-1}x_{k+i\cdot q(m-1)} \right)_{k\in \Z}.
$$
As we have mentioned before, $\SS^N=(\R/2\Z)^N$ is an abelian group. Since
\begin{equation*}
\begin{split}
    &\rho_N\left(\sum_{i=0}^{m-1}x_{k+i\cdot  q(m-1)} ,   \sum_{i=0}^{m-1}x_{(k+q(m-1))+i\cdot  q(m-1)} \right)\\
    =&\rho_N\left(\sum_{i=0}^{m-1}x_{k+i\cdot  q(m-1)} ,   \sum_{i=1}^{m}x_{k+i\cdot  q(m-1)}\right)\\
    =&\rho_N\left(x_{k} ,  x_{k+ m\cdot q(m-1)} \right)=\rho_N\left(x_{k} ,  x_{k+ q(m)} \right),~\forall k\in \Z,
\end{split}
\end{equation*}
we obtain that the image of $X_m$ under $\theta_{m,m-1}$ is contained in $X_{m-1}$. For $m>n$, we define $\theta_{m,n}=\theta_{m,m-1}\circ \theta_{m-1,m-2} \circ \dots \theta_{n+1,n}$ to be  an equivariant continuous map from $X_m$ to $X_n$.

Fix $a=(a_k)_{k\in \Z}\in (\SS^N)^\Z.$ For $m\ge 2$, we define a map $\eta_{m-1,m}=\eta_{m-1,m}^a: X_{m-1} \to (\SS^N)^\Z$ by
$\eta_{m-1, m}((x_k)_{k\in \Z})=(y_k)_{k\in \Z}$ where 
\begin{equation*}
    y_k=
    \begin{cases}
    \bigbreak
    a_k~&\text{if}~0\le k\le (m-1)\cdot q(m-1)-1,\\
    \bigbreak
    x_{k-(m-1)q(m-1)}-\sum_{i=1}^{m-1}a_{k-i\cdot q(m-1)}~&\text{if}~(m-1)\cdot q(m)\le k\le q(m)-1\\
    \sum_{i=0}^{n-1} \left( x_{i\cdot q(m)+q(m-1)+j}-x_{i\cdot q(m)+j} \right) +y_j &\text{if}~k=n\cdot q(m)+j \\
    \bigbreak
   & ~\text{with}~n>0~\text{and}~0\le j\le q(m)-1,\\
    \sum_{i=n}^{-1} \left( x_{i\cdot q(m)+j}-x_{i\cdot q(m)+q(m-1)+j} \right) +y_j &\text{if}~k= n\cdot q(m)+j\\
    &~\text{with}~n<0~\text{and}~0\le j\le q(m)-1.
    \end{cases}
\end{equation*}
We show several properties of $\eta_{m-1,m}$ in the following lemmas.
\begin{lem}\label{lem:image of eta}
For $m\ge 2$, $\eta_{m-1,m}(X_{m-1})\subset X_m$.
\end{lem}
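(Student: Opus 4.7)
The plan is to show directly that the definition of $\eta_{m-1,m}$ is engineered so that the identity
\begin{equation*}
y_{k+q(m)} - y_k \;=\; x_{k+q(m-1)} - x_k
\end{equation*}
holds in $\SS^N$ for every $k \in \Z$, where $(y_k)_{k\in\Z}:=\eta_{m-1,m}((x_k)_{k\in\Z})$. Once this is established, the conclusion is immediate from the homogeneity of $\rho_N$ on the abelian group $\SS^N$: we obtain $\rho_N(y_{k+q(m)}, y_k) = \rho_N(x_{k+q(m-1)} - x_k, 0) = \rho_N(x_{k+q(m-1)}, x_k) \ge \delta$, where the last inequality uses the hypothesis $(x_k) \in X_{m-1}=\XX(N, q(m-1), \delta)$.

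To prove the identity, I would carry out a case analysis following the piecewise definition. Writing $k = n q(m) + j$ with $0 \le j \le q(m)-1$, the four regimes are handled as follows. When $n \ge 1$, both $y_k$ and $y_{k+q(m)}$ are given by the third formula; the two telescoping sums differ by exactly the top term $x_{n q(m) + q(m-1) + j} - x_{n q(m) + j}$, which equals $x_{k+q(m-1)} - x_k$. When $n \le -2$, both $y_k$ and $y_{k+q(m)}$ are given by the fourth formula, and the telescoping sums differ by the bottom term, which rearranges to the same expression. The two boundary cases must be checked separately: for $n = 0$ the index $k + q(m)$ falls in case three with $n = 1$, and the formula yields $y_{k+q(m)} = y_k + (x_{k+q(m-1)} - x_k)$ directly; for $n = -1$ we have $k + q(m) = j \in [0, q(m)-1]$, and the fourth formula reduces to $y_k = y_{k+q(m)} + (x_k - x_{k+q(m-1)})$, which is the same statement.

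The main obstacle is purely bookkeeping — ensuring that the four pieces of the piecewise definition glue together consistently at the boundaries $k = 0$ and $k = -1$ (and their translates by multiples of $q(m)$), and that the indices and signs in the telescoping sums line up. It is worth noting that the auxiliary parameter $a \in (\SS^N)^\Z$ cancels out entirely: in every case, the ``base value'' $y_j$ appearing on both sides of the equation involves $a$ identically, so it disappears upon forming the difference $y_{k+q(m)} - y_k$. Once this routine case verification is complete, the lemma follows.
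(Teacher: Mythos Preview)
Your proposal is correct and follows essentially the same approach as the paper: both prove the identity $y_{k+q(m)}-y_k = x_{k+q(m-1)}-x_k$ (the paper writes it as $y_k - y_{k-q(m)}$, which is the same after reindexing) by a case analysis on the piecewise definition, and then invoke homogeneity of $\rho_N$ to conclude. Your four-case split ($n\ge 1$, $n\le -2$, $n=0$, $n=-1$) matches the paper's division into $n=1$, $n\ge 2$, and $n\le 0$ up to a shift in the index variable.
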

\begin{proof}
Let $x\in X_{m-1}$ and $y=\eta_{m-1,m}(x)$. Let $k=n\cdot q(m)+j$ with $n\in \Z$ and $0\le j\le q(m)-1$. We divide the proof in the following three cases according to the value of $n$.

\vspace{5pt}

\noindent Case 1. $n=1$. We have
$$
y_k-y_{k-q(m)}=x_{q(m-1)+j}-x_j+y_j-y_j=x_{q(m-1)+j}-x_{j}.
$$
Since $x\in X_{m-1}$, we have
$$
\rho_N(y_{k}, y_{k-q(m)})=\rho_N(x_{q(m-1)+j}, x_{j})\ge \delta.
$$
\\
Case 2. $n\ge 2$. A simple computation shows that
\begin{equation*}
    \begin{split}
        &y_{k}-y_{k-q(m)}\\
        =&\sum_{i=0}^{n-1} \left( x_{i\cdot q(m)+q(m-1)+j}-x_{i\cdot q(m)+j} \right) - \sum_{i=0}^{n-2} \left( x_{i\cdot q(m)+q(m-1)+j}-x_{i\cdot q(m)+j} \right)\\
        =&x_{(n-1) q(m)+q(m-1)+j}-x_{(n-1)\cdot q(m)+j}=x_{k-q(m)+q(m-1)}-x_{k-q(m)}.
    \end{split}
\end{equation*}
It follows that 
$$
\rho_N(y_{k}, y_{k-q(m)})=\rho_N(x_{k-q(m)+q(m-1)}, x_{k-q(m)})\ge\delta.
$$
\\
Case 3. $n\le 0$. Similarly to Case 1 and Case 2, we have
\begin{equation*}
    y_{k}-y_{k-q(m)}=x_{k-q(m)+q(m-1)}-x_{k-q(m)},
\end{equation*}
and consequently $\rho_N(y_{k}, y_{k-q(m)})\ge\delta.$

\vspace{5pt}

To sum up, we conclude that $y\in X_m$ and $\eta_{m-1,m}(X_{m-1})\subset X_m$.
\end{proof}

By Lemma \ref{lem:image of eta}, we see that $\eta_{m-1, m}$ is the map from $X_{m-1}$ to $X_m$. Moreover, we show in the following that $\eta_{m-1, m}$ is indeed a right inverse map of $\theta_{m, m-1}$.

\begin{lem}\label{lem:identity}
$\theta_{m,m-1}\circ\eta_{m-1,m}=\text{id}, \forall m\ge 2.$
\end{lem}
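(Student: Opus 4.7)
The plan is to verify $(\theta_{m,m-1} \circ \eta_{m-1,m})(x)_k = x_k$ for every $x = (x_\ell)_{\ell \in \Z} \in X_{m-1}$ and every $k \in \Z$. Writing $y = \eta_{m-1,m}(x)$ and using $q(m) = m \cdot q(m-1)$, the target identity reads
$$S(k) := \sum_{i=0}^{m-1} y_{k + iq(m-1)} = x_k, \qquad k \in \Z.$$
I will prove this in two stages: a direct computation on the fundamental block $0 \le k \le q(m-1)-1$, followed by a propagation step showing that $k \mapsto S(k) - x_k$ is periodic with period $q(m-1)$.

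For the base case, when $0 \le k \le q(m-1)-1$ the indices $k + iq(m-1)$ with $i = 0,\ldots,m-2$ all lie in $[0,(m-1)q(m-1)-1]$, so each such $y_{k+iq(m-1)}$ equals $a_{k+iq(m-1)}$ by the first branch of the definition of $\eta_{m-1,m}$. The remaining index $k + (m-1)q(m-1)$ lies in $[(m-1)q(m-1),q(m)-1]$, and the second branch gives
$$y_{k+(m-1)q(m-1)} = x_{k} - \sum_{j=1}^{m-1} a_{k + (m-1-j)q(m-1)} = x_k - \sum_{i=0}^{m-2} a_{k + iq(m-1)}.$$
Summing the $m$ terms, the $a$-contributions cancel pairwise and leave $S(k) = x_k$ as required.

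For the propagation step, I will establish that for every $\ell \in \Z$,
$$y_{\ell + q(m)} - y_\ell = x_{\ell + q(m-1)} - x_\ell. \qquad (\star)$$
Granting $(\star)$, the sum defining $S(k+q(m-1))-S(k)$ telescopes to $y_{k + mq(m-1)} - y_k = y_{k+q(m)}-y_k$, which by $(\star)$ equals $x_{k+q(m-1)} - x_k$; consequently $k \mapsto S(k) - x_k$ is $q(m-1)$-periodic. Combined with the base case this forces $S(k) = x_k$ for every $k \in \Z$.

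To prove $(\star)$, write $\ell = n q(m) + j$ with $n \in \Z$ and $0 \le j \le q(m)-1$, so that $\ell + q(m) = (n+1)q(m) + j$, and split into the cases $n \ge 1$, $n = 0$, $n = -1$, and $n \le -2$. In each, the explicit third- and fourth-branch formulas for $y$ collapse upon subtraction, leaving exactly the single new summand $x_{nq(m) + q(m-1) + j} - x_{nq(m) + j} = x_{\ell + q(m-1)} - x_\ell$. The main obstacle is the case-by-case bookkeeping across these four branches, but the piecewise definition of $\eta_{m-1,m}$ has been crafted precisely so that every telescoping cancellation yields this uniform difference of $x$'s.
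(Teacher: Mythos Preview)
Your proof is correct and takes a genuinely different route from the paper's. Both arguments share the same base case (direct verification of $S(k)=x_k$ for $0\le k\le q(m-1)-1$), but after that the paper performs a direct brute-force computation of $S(k)$ for general $k$, writing $k=s\,q(m)+t\,q(m-1)+j$ and expanding each $y_{k+iq(m-1)}$ via the third/fourth branch; this yields a double sum that is then telescoped in two stages. You instead observe that the single identity $(\star)$: $y_{\ell+q(m)}-y_\ell=x_{\ell+q(m-1)}-x_\ell$ forces $S(k)-x_k$ to be $q(m-1)$-periodic, so the base case propagates automatically. This is cleaner and more conceptual: it isolates exactly the one cancellation that matters. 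In fact $(\star)$ is precisely the computation carried out in the proof of Lemma~\ref{lem:image of eta} (there written as $y_k-y_{k-q(m)}=x_{k-q(m)+q(m-1)}-x_{k-q(m)}$), so your argument can be read as saying that Lemma~\ref{lem:identity} follows from the base case together with the difference identity already established in Lemma~\ref{lem:image of eta}. One minor imprecision: in the cases $n=0$ and $n=-1$ of your verification of $(\star)$, one of $y_\ell$, $y_{\ell+q(m)}$ is given by the first/second branch (namely $y_j$) rather than the third/fourth, so ``third- and fourth-branch formulas'' is not quite accurate there; but the subtraction still produces the claimed difference, so the argument goes through.
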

\begin{proof}
Let $x\in X_{m-1}$ and $y=\eta_{m-1,m}(x)$. Then we have
$$
\theta_{m,m-1}\circ\eta_{m-1,m}(x)=\theta_{m,m-1}(y)=\left(\sum_{i=0}^{m-1}y_{k+i\cdot q(m-1)} \right)_{k\in \Z}.
$$

If $0\le k\le q(m-1)-1$, then 
\begin{equation*}
    \begin{split}
        &\sum_{i=0}^{m-1}y_{k+i\cdot q(m-1)}\\
        =&\sum_{i=0}^{m-2}a_{k+i\cdot q(m-1)}+ \left(x_{k}-\sum_{i=1}^{m-1}a_{k+(m-1-i)q(m-1)}\right)
        =x_{k}.
    \end{split}
\end{equation*}

If $k=s\cdot q(m)+t\cdot q(m-1)+j>q(m-1)$ for $s>0$, $0\le t\le m-1$ and $0\le j\le q(m-1)-1$, then 
\begin{equation*}
    \begin{split}
      &\sum_{i=0}^{m-1}y_{k+i\cdot q(m-1)}\\
        =& \sum_{i=t}^{m-1}y_{s\cdot q(m)+i\cdot q(m-1)+j}+\sum_{i=0}^{t-1}y_{(s+1) q(m)+i \cdot q(m-1)+j}\\
        =& \sum_{i=t}^{m-1}\sum_{\ell=0}^{s-1} \left( x_{\ell \cdot q(m)+(i+1)q(m-1)+j}-x_{\ell\cdot q(m)+i\cdot q(m-1)+j} \right) \\
        &+\sum_{i=0}^{t-1}\sum_{\ell=0}^{s} \left( x_{\ell \cdot q(m)+(i+1)q(m-1)+j}-x_{\ell\cdot q(m)+i\cdot q(m-1)+j} \right) +\sum_{i=0}^{m-1}y_{i\cdot q(m-1)+j}\\
        =&\sum_{\ell=0}^{s-1}\sum_{i=0}^{m-1} \left( x_{\ell \cdot q(m)+(i+1)q(m-1)+j}-x_{\ell\cdot q(m)+i\cdot q(m-1)+j} \right) \\
        &+\sum_{i=0}^{t-1}\left( x_{s \cdot q(m)+(i+1)q(m-1)+j}-x_{s\cdot q(m)+i\cdot q(m-1)+j} \right)+ x_j\\
        =&\sum_{\ell=0}^{s-1} \left( x_{(\ell+1) \cdot q(m)+j}-x_{\ell\cdot q(m) +j} \right) 
        +\left( x_{s \cdot q(m)+t\cdot q(m-1)+j}-x_{s\cdot q(m)+j} \right)+ x_j\\
        =& x_{s\cdot q(m)+j}-x_j+x_{k}-x_{s\cdot q(m)+j} +x_j=x_k.
    \end{split}
\end{equation*}

If $k=s\cdot q(m)+t\cdot q(m-1)+j$ for $s<0$, $0\le t\le m-1$ and $0\le j\le q(m-1)-1$, then by the similar computation of the case where $s>0$, we have $\sum_{i=0}^{m-1}y_{k+i\cdot q(m-1)}=x_k$. This completes the proof.
\end{proof}

By Lemma \ref{lem:identity}, the map $\theta_{m,n}$ are surjective maps from $X_m$ to $X_n$ for all $m>n$. Therefore, denote by $\varprojlim (X_n,T_n)$ the inverse limit of $\{(X_n, T_n)\}_{n\in\N}$ via $(\theta_{m,n})_{n,m\in \N, m>n}$. 

\begin{lem}\label{lem:aperiodic}
The inverse limit $\varprojlim (X_n,T_n)$ is aperiodic.
\end{lem}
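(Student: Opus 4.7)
The plan is to exploit the defining condition of $X_m = \mathcal{X}(N, q(m), \delta)$ to rule out $k$-periodic points at level $m$ whenever $k$ divides $q(m) = m!$, and then lift this to the inverse limit via the projection maps $\pi_n$.

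First, I would establish the following local statement: for each $m \geq 1$ and each positive integer $k$ with $k \mid q(m)$, the system $(X_m, \sigma)$ has no $k$-periodic points. Indeed, suppose $x = (x_n)_{n \in \Z} \in X_m$ satisfies $\sigma^k x = x$. Then $x_n = x_{n+k}$ for all $n \in \Z$, and iterating gives $x_n = x_{n+jk}$ for every $j \in \Z$. Since $k \mid q(m)$, taking $j = q(m)/k$ yields $x_n = x_{n+q(m)}$ for every $n$, so $\rho_N(x_n, x_{n+q(m)}) = 0$, contradicting the defining inequality $\rho_N(x_n, x_{n+q(m)}) \geq \delta > 0$ of $X_m$.

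Next, I would transfer this to the inverse limit. Let $(X, T) = \varprojlim(X_n, T_n)$ with projections $\pi_n : X \to X_n$ satisfying $\sigma \circ \pi_n = \pi_n \circ T$. Suppose for contradiction that $x \in X$ is $k$-periodic, i.e.\ $T^k x = x$ for some $k \geq 1$. Applying $\pi_n$ gives $\sigma^k \pi_n(x) = \pi_n(T^k x) = \pi_n(x)$, so $\pi_n(x) \in X_n$ is a $k$-periodic point of $\sigma$. Now choose any $n \geq k$; then $k \mid n! = q(n)$, and by the previous paragraph $X_n$ has no $k$-periodic points. This is the desired contradiction, so no such $x$ exists and $(X, T)$ is aperiodic.

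There is no serious obstacle here: the argument reduces to the divisibility observation that $k$ divides $q(n) = n!$ for all $n \geq k$, combined with the elementary remark that a point periodic under $\sigma^k$ is automatically periodic under $\sigma^{jk}$ for every $j$. The maps $\theta_{m,n}$ and $\eta_{m-1,m}$ built in the preceding lemmas play no role in this particular step; they are needed only to guarantee that the inverse limit is nonempty (via surjectivity of $\theta_{m,n}$) and for the subsequent analysis of mean dimension and the marker property. Aperiodicity itself is a direct consequence of the cascade of shift-length conditions encoded by the sequence $q(1) < q(2) < \cdots$.
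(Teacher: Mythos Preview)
Your proof is correct and follows essentially the same approach as the paper: project a hypothetical $k$-periodic point of the inverse limit to a level $n$ with $k \mid q(n)$, and observe that the resulting $k$-periodicity at that level forces $x_0 = x_{q(n)}$, contradicting the defining inequality of $X_n = \XX(N, q(n), \delta)$. The paper simply takes $n = k$ (since $k \mid q(k)$) rather than an arbitrary $n \geq k$, but the argument is the same.
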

\begin{proof}
Suppose $\varprojlim (X_n,T_n)$ has a periodic point $y$ of period $n$. Let $\pi_n: \varprojlim (X_n,T_n) \to (X_n, T_n)$ be the natural projection. Then $x:=\pi_n(y)$ has the period $n$ in $(X_n, T_n)$. It follows from $n\mid q(n)$ that $x_0=x_{q(n)}$. This is a contradiction to the definition of $X_n$, that is, $|x_0-x_{q(n)}|>\delta$. Thus we conclude that  $\varprojlim (X_n,T_n)$ is aperiodic.
\end{proof}

We will show later that $\varprojlim (X_n,T_n)$ is desired for Theorem \ref{thm:finite mean dimension}. To this end, we need to do some preparation.
Lindenstrauss \cite[Lemma 3.3]{L99} introduced a topological dynamics version of the Rokhlin tower lemma in ergodic theory.
\begin{lem}\label{lem:Lindenstrauss}
Let $(X, T)$ be a dynamical system having the marker property. For each positive number $N$ there is a continuous function $\varphi: X \to \R$ such that the set
$$
E:=\{x\in X: \varphi(Tx)\not=\varphi(x)+1 \}
$$
satisfies that $E\cap T^{-n}E=\emptyset$ for all $1\le n\le N$.
\end{lem}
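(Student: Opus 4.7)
The plan is to use the marker property to build a continuous ``time since the orbit last visited the marker'' function $\varphi$, whose cocycle $\varphi \circ T - \varphi$ equals $1$ away from a small neighbourhood of the marker set.

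First I would apply the marker property with threshold $N+1$ to obtain an open set $U \subset X$ satisfying $U \cap T^{-n}U = \emptyset$ for $0 < n \le N$ and $X = \bigcup_{n \in \Z} T^n U$. A standard compactness plus shrinking argument---extract a finite subcover of $\{T^n U\}_{n\in\Z}$, shrink each piece so that its closure is contained in $U$, and translate by a uniform amount---produces an open set $W$ with $\bar W \subset U$ and an integer $M \ge 1$ such that $X = \bigcup_{n=0}^{M} T^n W$; equivalently, for each $x \in X$ there exists $0 \le j \le M$ with $T^{-j}x \in W$. Urysohn's lemma then yields a continuous $f: X \to [0,1]$ with $f \equiv 1$ on $\bar W$ and $f \equiv 0$ on $X \setminus U$.

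The function I would consider is the telescoping ``survival product''
\[
\varphi(x) := \sum_{k=0}^{\infty} \prod_{j=0}^{k} \bigl(1 - f(T^{-j}x)\bigr).
\]
For every $x$ some $j_0 \in [0, M]$ satisfies $f(T^{-j_0}x) = 1$, so the factor $1 - f(T^{-j_0}x) = 0$ annihilates all summands with $k \ge j_0$. Hence the series is in fact a finite sum of continuous functions, and $\varphi$ is continuous, bounded and non-negative. Pulling the $j = 0$ factor out of the product defining $\varphi(Tx)$ and re-indexing $j \mapsto j-1$ in the remainder yields the cocycle identity
\[
\varphi(Tx) = \bigl(1 - f(Tx)\bigr)\bigl(1 + \varphi(x)\bigr),
\]
which is the algebraic heart of the argument.

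Since $1 + \varphi(x) > 0$, this identity forces $\varphi(Tx) = \varphi(x) + 1$ precisely when $f(Tx) = 0$; hence $E \subset \{x : Tx \in U\} = T^{-1}U$. The marker condition on $U$ then immediately gives
\[
E \cap T^{-n} E \subset T^{-1}\bigl(U \cap T^{-n} U\bigr) = \emptyset \qquad (1 \le n \le N),
\]
as desired. The main conceptual move is the survival-product formula, which continuously encodes the discrete, a priori non-continuous quantity ``number of backward iterates until the orbit re-enters the marker''; the one place requiring a little care is the shrinking step that produces $W$ with $\bar W \subset U$ while preserving the covering property and the uniform past-return bound $M$, but this is a routine compactness manoeuvre.
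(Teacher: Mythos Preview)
The paper does not give its own proof of this lemma; it simply cites Lindenstrauss \cite[Lemma~3.3]{L99}. Your argument is correct and is essentially Lindenstrauss's original construction: the survival-product formula $\varphi(x)=\sum_{k\ge 0}\prod_{j=0}^{k}(1-f(T^{-j}x))$ is precisely the device used there to turn the discontinuous ``last hitting time of the marker'' into a continuous function, and your verification of the identity $\varphi(Tx)=(1-f(Tx))(1+\varphi(x))$ together with the inclusion $E\subset T^{-1}U$ is clean and complete.
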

As it is pointed out by Gutman \cite[Theorem 7.3]{Gut15Jaworski}, the existence of a
continuous function $\varphi$ in Lemma \ref{lem:Lindenstrauss} is indeed equivalent to the marker property.  

Define
$$
\YY:=\{(x_n)_{n\in  \Z}\in  \SS^\Z: \forall n\in \Z, ~\text{either}~\rho(x_{n-1}, x_{n})=1 ~\text{or}~ \rho(x_{n}, x_{n+1})=1   \}.
$$
Concerning with the system $(\YY, \sigma)$, Tsukamoto, Tsutaya and Yoshinaga showed a necessary condition for the marker property \cite[Lemma 5.3]{tsukamoto2020markerproperty}. We give the proof here for completeness.

\begin{lem}\label{lem:to Y}
Let $(X,T)$ be a dynamical system having marker property. Then there
is an equivariant continuous map from $(X, T)$ to $(\YY, \sigma)$.
\end{lem}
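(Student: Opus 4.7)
The plan is to feed the marker-style continuous function produced by Lemma \ref{lem:Lindenstrauss} into a pointwise reduction modulo $2$, and then take the orbit. Concretely, apply Lemma \ref{lem:Lindenstrauss} with $N=1$ to obtain a continuous $\varphi:X\to\R$ such that the set
\[
E=\{x\in X:\varphi(Tx)\neq\varphi(x)+1\}
\]
satisfies $E\cap T^{-1}E=\emptyset$. Define $f:X\to\SS=\R/2\Z$ by $f(x)=\varphi(x)\bmod 2$, and then $F:X\to \SS^{\Z}$ by $F(x)=\bigl(f(T^n x)\bigr)_{n\in\Z}$. By construction $F$ is continuous and equivariant with respect to $T$ and the shift $\sigma$.

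It then remains to verify that $F(X)\subset\YY$. For any $x\in X$ and any $n\in\Z$, set $y_k=f(T^k x)$. If $T^{n-1}x\notin E$, then $\varphi(T^n x)=\varphi(T^{n-1}x)+1$, whence $y_n-y_{n-1}=1$ in $\SS$, and so $\rho(y_{n-1},y_n)=1$. Likewise, if $T^n x\notin E$, then $\rho(y_n,y_{n+1})=1$. Because $E\cap T^{-1}E=\emptyset$, the two conditions $T^{n-1}x\in E$ and $T^n x\in E$ cannot hold simultaneously, so at least one of these two alternatives is forced at every index $n$. This is exactly the defining condition of $\YY$, so $F(x)\in\YY$ for every $x\in X$.

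I do not foresee a substantive obstacle here: once one recognises that $\rho(a,b)=1$ on $\SS=\R/2\Z$ is precisely the condition ``$b-a\equiv 1\pmod 2$'', the marker function $\varphi$ from Lemma \ref{lem:Lindenstrauss} packages all the information needed, and the tower condition $E\cap T^{-1}E=\emptyset$ corresponds one-for-one with the disjunction defining $\YY$. The only mildly delicate point is being explicit that Lemma \ref{lem:Lindenstrauss} is applied with $N=1$ (nothing larger is needed, since the defining condition of $\YY$ only constrains consecutive pairs), and noting that continuity of $F$ follows from continuity of $\varphi$ together with the definition of the product topology on $\SS^{\Z}$.
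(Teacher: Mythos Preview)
Your proof is correct and follows essentially the same route as the paper: apply Lemma~\ref{lem:Lindenstrauss} (the paper does not name $N$, but only uses $E\cap T^{-1}E=\emptyset$, i.e.\ $N=1$), reduce $\varphi$ modulo $2$ to land in $\SS$, take the orbit map, and use $E\cap T^{-1}E=\emptyset$ to verify the defining disjunction of $\YY$. Your indexing (checking $T^{n-1}x$ and $T^{n}x$) matches the definition of $\YY$ directly, whereas the paper's indices are shifted by one, but the content is identical.
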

\begin{proof}
By Lemma \ref{lem:Lindenstrauss}, there is a continuous function $\varphi: X \to \R$ such that the set
$$
E:=\{x\in X: \varphi(Tx)\not=\varphi(x)+1 \}
$$
satisfies that $E\cap T^{-1}E=\emptyset$. Let $P: \R \to \SS$ be the natural projection. Set $\phi=P\circ\varphi: X\to \SS$. Notice that 
\begin{equation}\label{eq:11}
    x\notin E \Longrightarrow \rho(\phi(Tx), \phi(x))=1.
\end{equation}
Define an equivariant continuous map $f: X\to \SS^\Z$ by
$$
x\mapsto (\phi(T^nx))_{n\in \Z}.
$$
Fix $x\in X$ and $ n\in \Z$. Since $E\cap T^{-1}E=\emptyset$, we get that either $T^nx\notin E$ or $T^{n+1}x\notin E$. It follows from \eqref{eq:11} that either $\rho(\phi(T^{n+1}x), \phi(T^nx))=1$ or $\rho(\phi(T^{n+2}x), \phi(T^{n+1}x))=1$. Since $x$ and $n$ are chosen arbitrarily, we obtain that the image of $X$ under $f$ is included in $\YY$. This completes the proof.

\end{proof}

Recall that $\eta_{m-1,m}$ is the map from $X_{m-1}$ to $X_m$ for $m\ge 2$.
Since we have  
\begin{equation*}
    \begin{split}        
    &d(\eta_{m-1,m}(x), \eta_{m-1,m}(y))\\
    \le  & \sum_{k=(m-1)q(m-1)}^{q(m)-1} \frac{1}{2^k} \rho_N(x_{k-(m-1)q(m-1)}, y_{k-(m-1)q(m-1)})
        \\
        & +\sum_{n=1}^{+\infty} \sum_{j=0}^{q(m)-1} \frac{1}{2^{n\cdot q(m)+j}} \sum_{i=0}^{n-1}(\rho_N(x_{i\cdot q(m)-q(m-1)+j}, y_{i\cdot q(m)-q(m-1)+j})\\
        &+\rho_N(x_{i\cdot q(m)+j}, y_{i\cdot q(m)+j}))
        + \sum_{n=-1}^{-\infty} \sum_{j=0}^{q(m)-1} \frac{1}{2^{|n\cdot q(m)+j|}} \cdot \\ &\sum_{i=n}^{-1}(\rho_N(x_{i\cdot q(m)+j}, y_{i\cdot q(m)+j})+\rho_N(x_{i\cdot q(m)+q(m-1)+j}, y_{i\cdot q(m)+q(m-1)+j})),
    \end{split}
\end{equation*}
the map $\eta_{m-1,m}$ is continuous for each $m\ge 2$. We define a continuous map $\gamma_m: X_m \to \varprojlim (X_n,T_n)$ by
$$
x\mapsto (\theta_{m,1}(x), \theta_{m,2}(x), \dots, \theta_{m,m-1}(x), x, \eta_{m,m+1}(x), \eta_{m,m+2}(x), \dots ),
$$
where $\eta_{m,m+n}(x)=\eta_{m+n-1,m+n}\circ\cdots\eta_{m+1,m+2}\circ\eta_{m,m+1}(x)$.

\begin{lem}\label{lem:to Z}
Let $(X_n, T_n)$ be defined as above.
Suppose  there
is an equivariant continuous map $f: \varprojlim (X_n,T_n) \to (\YY, \sigma)$. Then
there exists an integer $M$ and an equivariant continuous map
 $$g: (X_M, T_M) \longrightarrow  (\ZZ, \sigma). $$ 
\end{lem}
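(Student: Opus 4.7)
The plan is to pull $f$ back along the continuous equivariant section $\gamma_M:X_M\to\varprojlim(X_n,T_n)$ constructed in the paragraph preceding the lemma; the trivial inclusion $\YY\subset\ZZ$ then finishes the job, without any further idea.

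I would first observe this inclusion: whenever $\rho(x_{n-1},x_n)=1$ or $\rho(x_n,x_{n+1})=1$, that distance is in particular $\geq 1/2$, so any continuous equivariant map into $\YY$ is automatically a continuous equivariant map into $\ZZ$. Next, for any integer $M\geq 1$ (for instance $M=1$), the map $\gamma_M:X_M\to\varprojlim(X_n,T_n)$ is continuous and equivariant, and satisfies $\pi_M\circ\gamma_M=\text{id}_{X_M}$, so it takes values in the inverse limit. These properties rest on Lemma \ref{lem:image of eta} (ensuring $\eta_{m-1,m}(X_{m-1})\subset X_m$), Lemma \ref{lem:identity} (giving $\theta_{m,m-1}\circ\eta_{m-1,m}=\text{id}$), and the explicit estimate on $d(\eta_{m-1,m}(x),\eta_{m-1,m}(y))$ displayed just before the lemma (giving continuity of $\eta_{m-1,m}$, hence of $\gamma_M$ via the product topology).

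Setting
$$
g := f\circ\gamma_M : (X_M,T_M)\longrightarrow(\YY,\sigma)\hookrightarrow(\ZZ,\sigma),
$$
we obtain a composition of continuous equivariant maps whose image lies in $\ZZ$. This is the desired $g$. There is essentially no obstacle; the only place where anything could go wrong is the existence of a continuous equivariant section $\gamma_M$, and this is exactly what the piecewise definition of $\eta_{m-1,m}$ was engineered to provide. It is also what distinguishes this particular inverse limit from generic ones, for which, as the introduction remarks, no analogue of this lemma holds.
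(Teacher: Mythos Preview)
Your argument hinges on the claim that $\gamma_M$ is equivariant, but this is false. The maps $\theta_{m,n}$ are equivariant, but the maps $\eta_{m-1,m}$ are not: by construction, $(\eta_{m-1,m}(x))_k=a_k$ for all $0\le k\le (m-1)q(m-1)-1$, independently of $x$. In particular, $(\eta_{m-1,m}(\sigma x))_{(m-1)q(m-1)-1}=a_{(m-1)q(m-1)-1}$ is a constant, whereas $(\sigma\,\eta_{m-1,m}(x))_{(m-1)q(m-1)-1}=(\eta_{m-1,m}(x))_{(m-1)q(m-1)}$ depends on $x_0$. Hence $\eta_{m-1,m}\circ\sigma\neq\sigma\circ\eta_{m-1,m}$, and so $\gamma_M\circ T_M\neq T\circ\gamma_M$. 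Your composite $f\circ\gamma_M$ is therefore continuous but \emph{not} equivariant, and the proof collapses.

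This non-equivariance is precisely the reason the paper needs the weaker target $\ZZ$ rather than $\YY$: the gap between ``$=1$'' and ``$\ge 1/2$'' is there to absorb an error, not merely to make an inclusion trivial. The paper's argument runs as follows. Since $\pi_M$ is equivariant and $\pi_M\circ\gamma_M=\mathrm{id}$, the two points $\gamma_M(T_M^n x)$ and $T^n(\gamma_M(x))$ always have the same $\pi_M$-image. One first chooses $M$ large enough (by uniform continuity of the $0$-th coordinate $\phi$ of $f$) so that equal $\pi_M$-images force $\phi$-values within $1/4$. One then \emph{defines} $g$ as the genuinely equivariant orbit map $x\mapsto(\varphi(T_M^n x))_{n\in\Z}$ with $\varphi=\phi\circ\gamma_M$, and uses the $1/4$-closeness together with the fact that $f$ lands in $\YY$ to show, via the triangle inequality, that consecutive coordinates of $g(x)$ satisfy the ``$\ge 1/2$'' condition defining $\ZZ$. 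Without this detour, there is no way to manufacture an equivariant map out of $\gamma_M$.
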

\begin{proof}
For convenience,  we denote the inverse limit by $(\XX, T)=\varprojlim (X_n,T_n)$. Let $\pi_m: \XX\to X_m$ be the natural projection for $m\in \N$. Let $P_1: \SS^\Z \to \SS$ be the projection on $0$-th coordinate. 
Define $\phi=P_1\circ f: \XX \to \SS$. Then $f(x)=(\phi(T^nx))_{n\in \Z}$ for any $x\in \XX$.

Notice that there exists an integer $M>0$ such that 
\begin{equation}\label{eq:3}
    \pi_M(x)=\pi_M(y) \Longrightarrow \rho(\phi(x), \phi(y))<\frac{1}{4}.
\end{equation}
Define a continuous map $\varphi=\phi\circ \gamma_M: X_M \to \SS$ where $\gamma_M: X_M\to \XX$ is defined as above. Now we define an equivariant continuous map $g: X_M \to \SS^\Z$ by 
$$
x \mapsto (\varphi(T_M^n(x)))_{n\in \Z}.
$$
Since $\pi_M\circ \gamma_M={\rm id}$ and $\pi_M\circ T=T_M\circ \pi_M$, it follows from \eqref{eq:3} that 
\begin{equation}\label{eq:4}
    \rho\left(\phi(\gamma_M(T_M^n(x)) ), \phi(T^n(\gamma_M(x)) )\right)<\frac{1}{4}, \forall n\in \Z.
\end{equation}
Fix $x\in X_M$ and $n\in \Z$. By definitions of $\YY$ and $f$, there exists an $i\in \{0,1\}$ such that
\begin{equation}\label{eq:5}
    \rho(\phi(T^{n+i}(\gamma_M(x))),\phi(T^{n+i+1}(\gamma_M(x))) )=1.
\end{equation}
Combing \eqref{eq:5} with \eqref{eq:4}, we obtain that
\begin{equation*}
\begin{split}
    &\rho\left(\phi(\gamma_M(T_M^{n+i}x) ), \phi(\gamma_M(T_M^{n+i+1}x) )\right)\\
    \ge 
    &~ \rho(\phi(T^{n+i}(\gamma_M(x))),\phi(T^{n+i+1}(\gamma_M(x))) ) \\
    &\quad - \rho\left(\phi(\gamma_M(T_M^{n+i}x) ), \phi(T^{n+i}(\gamma_M(x)) )\right)\\
    &\qquad -\rho\left(\phi(\gamma_M(T_M^{n+i+1}x) ), \phi(T^{n+i+1}(\gamma_M(x)) )\right)\\
    \ge&~ 1-\frac{1}{4}-\frac{1}{4}=\frac{1}{2}.
\end{split}
\end{equation*}
Since $\varphi=\phi\circ \gamma_M$, we have that $$\rho\left(\varphi(T_M^{n+i}x), \varphi(T_M^{n+i+1}x) \right)\ge \frac{1}{2}.$$
By definition of $g$ and arbitrariness of $n$ and $x$, we conclude that the image of $X_M$ under $g$ is contained in $\ZZ$. This completes the proof. 
\end{proof}

Now we present a result for general inverse limits.

\begin{prop}\label{prop:mean dimension of inverse limit}
Let $\{(X_n, T_n)\}$ be a sequence of dynamical system. Let $\varprojlim (X_n,T_n)$ be the inverse limit of  $\{(X_n, T_n) \}_{n\in \N}$ via factor maps $(\tau_{m,n})_{m,n\in \N, m>n}$. Then
$$
\mdim ( \varprojlim (X_n,T_n))\le \sup_{n} \mdim (X_n, T_n).
$$
\end{prop}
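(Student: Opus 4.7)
The plan is to reduce any finite open cover of $X := \varprojlim (X_n, T_n)$ to one pulled back from a single factor $X_n$, and then apply the two basic monotonicity properties of $D$ recalled at the end of the preliminaries.

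First I would describe the topology of $X \subset \prod X_n$ more carefully. The basic open sets are restrictions of finite cylinders $\bigcap_{k \in F} \pi_k^{-1}(U_k)$ with $U_k \subset X_k$ open. Using the relation $\pi_k = \tau_{n,k} \circ \pi_n$ valid for any $n \geq \max F$, such a cylinder rewrites as $\pi_n^{-1}\!\left(\bigcap_{k \in F} \tau_{n,k}^{-1}(U_k)\right)$. Hence the sets of the form $\pi_n^{-1}(U)$, $n \in \N$, $U \subset X_n$ open, form a base for the topology of $X$. Moreover, each $\pi_n : X \to X_n$ is surjective (a standard fact for inverse limits of compact Hausdorff spaces along surjective bonding maps).

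Now let $\AA$ be any finite open cover of $X$. By the base property above, each point $x \in X$ has a neighborhood of the form $\pi_{n(x)}^{-1}(V_x)$ contained in some element of $\AA$. By compactness of $X$, finitely many such neighborhoods cover $X$; taking $n$ to be the maximum of the corresponding indices and using $\pi_{n(x)} = \tau_{n,n(x)} \circ \pi_n$, we may assume all neighborhoods are pullbacks via the single $\pi_n$. Thus there exists an integer $n$ and a finite open cover $\BB$ of $X_n$ (the surjectivity of $\pi_n$ ensures $\BB$ really covers $X_n$) such that $\pi_n^{-1}(\BB) \succ \AA$. Using $T_n \circ \pi_n = \pi_n \circ T$, together with $D(f^{-1}(\mathcal{C})) \le D(\mathcal{C})$ and monotonicity of $D$ under refinement, I would then write
$$
D\!\left(\bigvee_{i=0}^{k-1} T^{-i} \AA\right) \le D\!\left(\bigvee_{i=0}^{k-1} T^{-i} \pi_n^{-1}(\BB)\right) = D\!\left(\pi_n^{-1}\!\left(\bigvee_{i=0}^{k-1} T_n^{-i} \BB\right)\right) \le D\!\left(\bigvee_{i=0}^{k-1} T_n^{-i} \BB\right).
$$
Dividing by $k$ and letting $k \to \infty$ yields $\lim_k \frac{1}{k} D(\bigvee_{i=0}^{k-1} T^{-i}\AA) \le \mdim(X_n, T_n) \le \sup_m \mdim(X_m, T_m)$. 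Taking the supremum over $\AA$ gives the desired inequality.

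The only genuine obstacle is the topological reduction step, i.e., showing that any finite open cover of the inverse limit can be refined by a pullback of a cover from a single stage $X_n$. This requires both the characterization of the subspace topology on $X$ by single-coordinate cylinders and a compactness argument to collapse the finitely many occurring indices into one. The rest of the argument is formal manipulation of $D$ via the properties listed in Section~\ref{sec:Preliminaries}.
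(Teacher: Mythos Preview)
Your proof is correct and follows essentially the same route as the paper: reduce an arbitrary finite open cover of the inverse limit to a pullback $\pi_n^{-1}(\BB)$ from a single stage, then use equivariance of $\pi_n$ together with the monotonicity properties of $D$ to bound the cover growth by that in $(X_n,T_n)$. The paper states the reduction step more tersely (asserting directly the existence of covers $\AA_i$ with $\bigvee_{i=1}^m\pi_i^{-1}(\AA_i)\succ\AA$ and then collapsing via $\pi_i=\tau_{m,i}\circ\pi_m$), while you spell out the base-and-compactness argument and note the surjectivity of $\pi_n$; the substance is identical.
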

\begin{proof}
Let $\pi_i: \varprojlim (X_n,T_n) \to X_i $ be the projections.
Pick a finite open cover $\AA$ of $\varprojlim (X_n,T_n)$. Then there exists $m=m(\AA)>0$ and finite open covers $\AA_i$ of $X_i$ for $1\le i\le m$ such that 
$$
\bigvee_{i=1}^m \pi_i^{-1}(\AA_i) \succ \AA. 
$$
Notice that 
$$
\bigvee_{i=1}^m \pi_i^{-1}(\AA_i)=\bigvee_{i=1}^m \pi_m^{-1}(\tau_{m,i}^{-1}(\AA_i))= \pi_m^{-1}\left(\bigvee_{i=1}^m\tau_{m,i}^{-1}(\AA_i)\right).
$$
Set $\BB:=\bigvee_{i=1}^m\tau_{m,i}^{-1}(\AA_i)$ which is a finite open cover of $X_m$. Then we have $\pi_m^{-1}(\BB)$ refines $\AA$. It follows that
\begin{equation*}
    \begin{split}
        \frac{D(\vee_{i=0}^{N} T^{-i}\AA )}{N} &\le \frac{D(\vee_{i=0}^{N} T^{-i} \pi_m^{-1}(\BB) )}{N}\\
&=\frac{D(\pi_m^{-1} (\vee_{i=0}^{N} T_m^{-i} (\BB)) )}{N}
\le  \frac{D(\vee_{i=0}^{N} T_m^{-i}\BB )}{N}.
    \end{split}
\end{equation*}
Therefore, it implies that $\mdim(\varprojlim (X_n,T_n))\le \sup_{n} \mdim (X_n, T_n).$ This completes the proof.
\end{proof}

Now we prove the main result in this section.
\begin{proof}[Proof of Theorem \ref{thm:finite mean dimension}]
Let $(X_m, T_m)$ be the dynamical systems defined as above. Since $X_m\subset (\SS^N)^\Z$, for every $m\in \N$, we have $$\mdim(X_m, T_m)\le \mdim((\SS^N)^\Z, \sigma)\le N$$ where the last inequality is due to \cite[Proposition 3.1]{LinWeiss2000MeanTopologicalDimension}.
Let $(\XX, T):= \varprojlim (X_n,T_n)$. By Lemma \ref{lem:aperiodic}, the dynamical system $(\XX, T)$ is aperiodic. By Proposition \ref{prop:mean dimension of inverse limit}, we have $\mdim(\XX, T)\le N$. Suppose $(\XX, T) $ has the marker property. By Lemma \ref{lem:to Y}, there
is an equivariant continuous map $f: (\XX, T) \to (\YY, \sigma)$. Then by Lemma \ref{lem:to Z}, there exists $M>0$ and
 an equivariant continuous map $g: (X_M, T_M) \to (\ZZ, \sigma)$. It follows from Corollary \ref{cor:basic property} (1) that 
$$
\coindexPER_p(X_M,T_M) \le \coindexPER_p(\ZZ, \sigma),
$$
for all prime numbers $p>M$.
By Lemma \ref{lem:m are the same coindex}, we get
$$
\coindexPER_p(X_1,T_1) \le \coindexPER_p(\ZZ, \sigma),
$$
for all prime numbers $p>M$. This is a contradiction to the choice of $N$ and $\delta$, that is, \eqref{eq:main}. Thus $(\XX, T)$ is an aperiodic dynamical system having finite mean dimension which does not have the marker property. We complete the proof.
\end{proof}


\section{Marker property and small mean dimension}\label{sec:Marker property and suspension flow}

Let $(X, T)$ be a dynamical system. Let $\Z_n:=\Z/n\Z$ be a finite abelian group for $n\ge 1$. We define the continuous map $T_n: X\times \Z_n \to X \times \Z_n$ by 
$$
T_n(x, k)=
\begin{cases}
\vspace{5pt}
(x, k+1)~&\text{if}~0\le k\le n-2,\\
(Tx,0)~&\text{if}~k=n-1.
\end{cases}
$$ 
We call $(X\times \Z_n, T_n)$ the {\it $\frac{1}{n}$-time system} over $(X,T)$. Actually, the dynamical system $(X\times \{0\}, T_n^n)$ is topologically conjugate to $(X, T)$. It follows that $\mdim(X\times \Z_n, T_n)=\frac{1}{n}\mdim(X,T)$.

Now we investigate the relation between the marker property of a dynamical system and the one of its $\frac{1}{n}$-time system.
\begin{prop}\label{prop:suspension flow and marker property}
A dynamical system $(X, T)$ has the marker property if and only if $(X\times \Z_n, {T}_n)$ has the marker property.
\end{prop}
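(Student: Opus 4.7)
The plan is to prove the two implications separately, both leveraging the identity $T_n^{n}=T\times\operatorname{id}_{\Z_n}$; equivalently, $T_n^{jn+r}(x,0)=(T^{j}x,r)$ for $0\le r<n$ and $j\in\Z$. The forward direction is handled directly from the definition, while the converse uses the equivalent functional formulation of the marker property given in Lemma~\ref{lem:Lindenstrauss} (whose converse, due to Gutman, is noted immediately after its statement).

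For $(\Rightarrow)$, fix $N\ge 1$. The marker property of $(X,T)$ yields an open $U\subset X$ with $U\cap T^{-k}U=\emptyset$ for $0<k<N$ and $\bigcup_{k\in\Z}T^{k}U=X$. Set $V:=U\times\{0\}$, which is open in $X\times\Z_n$. For any $k\in\Z$, writing $k=jn+r$ with $0\le r<n$, one has $T_n^{k}V=T^{j}U\times\{r\}$. Hence $V\cap T_n^{k}V$ is automatically empty when $r\ne 0$ (disjoint fibers) and, for $r=0$, equals $(T^{j}U\cap U)\times\{0\}$, which is empty whenever $0<|j|<N$; equivalently $V\cap T_n^{-k}V=\emptyset$ for $0<k<nN$. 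Covering is immediate: $\bigcup_{k\in\Z}T_n^{k}V=\bigcup_{r=0}^{n-1}\bigl(\bigcup_{j\in\Z}T^{j}U\bigr)\times\{r\}=X\times\Z_n$. Thus $V$ is a marker of parameter $nN$, well beyond what is required.

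For $(\Leftarrow)$, fix $N\ge 1$ and apply Lemma~\ref{lem:Lindenstrauss} to $(X\times\Z_n,T_n)$ with parameter $nN$ to obtain a continuous $\tilde\varphi:X\times\Z_n\to\R$ whose bad set $\tilde E:=\{y:\tilde\varphi(T_n y)\ne\tilde\varphi(y)+1\}$ satisfies $\tilde E\cap T_n^{-s}\tilde E=\emptyset$ for $1\le s\le nN$. Define $\varphi:X\to\R$ by $\varphi(x):=\tilde\varphi(x,0)/n$. Telescoping along the orbit segment $(x,0),(x,1),\dots,(x,n-1),(Tx,0)$ yields
\[
\varphi(Tx)-\varphi(x)=\frac{1}{n}\sum_{j=0}^{n-1}\bigl(\tilde\varphi(T_n^{j+1}(x,0))-\tilde\varphi(T_n^{j}(x,0))\bigr),
\]
which equals $1$ whenever every $T_n^{j}(x,0)$, $0\le j\le n-1$, lies outside $\tilde E$. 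Consequently the bad set $E$ of $\varphi$ is contained in $\bigcup_{j=0}^{n-1}\{x:T_n^{j}(x,0)\in\tilde E\}$.

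It remains to verify that $E\cap T^{-k}E=\emptyset$ for $0<k<N$: any $x$ in this intersection would furnish $0\le i,j\le n-1$ with $T_n^{i}(x,0)\in\tilde E$ and $T_n^{j}(T^{k}x,0)=T_n^{j+kn}(x,0)\in\tilde E$, hence $T_n^{i}(x,0)\in\tilde E\cap T_n^{-(j+kn-i)}\tilde E$ with shift $s:=j+kn-i\in[(k-1)n+1,(k+1)n-1]\subset[1,nN-1]$, contradicting the defining property of $\tilde\varphi$. Gutman's converse to Lemma~\ref{lem:Lindenstrauss} then supplies the marker property of $(X,T)$. The only mildly delicate point is the arithmetic bounding of $s$, which is precisely what forces one to apply Lemma~\ref{lem:Lindenstrauss} with the inflated parameter $nN$ rather than $N$; everything else is bookkeeping.
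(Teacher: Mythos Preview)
Your proof is correct. The forward direction matches the paper's argument essentially verbatim (the paper writes $U$ where you write $V=U\times\{0\}$, but the content is identical).

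For the converse you take a genuinely different route. The paper stays at the level of marker sets: given an $nN$-marker $W$ for $(X\times\Z_n,T_n)$, it forms
\[
U:=\Bigl(\bigcup_{j=0}^{n-1}T_n^{j}W\Bigr)\cap\bigl(X\times\{0\}\bigr)
\]
and checks directly that $U$ is an $(N-1)$-marker for $(X\times\{0\},T_n^{n})\cong(X,T)$. You instead pass through the functional characterisation, pulling back $\tilde\varphi$ to $\varphi(x)=\tilde\varphi(x,0)/n$ and controlling the bad set via the telescoping identity, then invoking Gutman's converse to Lemma~\ref{lem:Lindenstrauss}. Your arithmetic on the shift $s=j+kn-i\in[1,nN-1]$ is correct and is exactly why the inflated parameter $nN$ is needed, as you note. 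The paper's argument is slightly more self-contained (it does not appeal to Gutman's equivalence), while yours makes transparent that the $1/n$-time construction simply rescales the Rokhlin-tower height function by $n$; both approaches involve the same combinatorial bookkeeping at heart.
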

\begin{proof}
Suppose $(X, T)$ has the marker property. Let $N\ge 1$. Let $U\subset X$ be an open $N$-marker, i.e. $U\cap T^{k} U =\emptyset$ for $0<k\le N$ and $X=\cup_{k\in \Z} T^kU$. It is easy to see that $U\cap T_n^{k} U =\emptyset$ for $0<k\le nN$ and $X\times \Z_n=\cup_{k\in \Z} T_n^kU$. Thus $U$ is an open $nN$-marker of $(X\times \Z_n, {T}_n)$. As $N$ is chosen arbitrarily, we get that $(X\times \Z_n, {T}_n)$ has the marker property.

Now suppose $(X\times \Z_n, {T}_n)$ has the marker property. Since $(X, T)$ is topologically conjugate to $(X\times \{0\}, T_n^n)$, it is sufficient to show that $(X\times \{0\}, T_n^n)$ has the marker property.
Fix $N>0$. Let $W$ be an open $nN$-marker of $(X\times \Z_n, {T}_n)$. Let 
 $$U:=\left(\cup_{j=0}^{n-1}T_n^j(W)\right)\cap (X\times \{0\}),$$
 which is an open set in $X\times \{0\}$. Notice that 
 $$T_n^{kn}U=\left(\cup_{j=0}^{n-1}T_n^{kn+j}(W)\right)\cap (X\times \{0\}).$$
 By assumption, we have that $T_n^{i}(W) \cap T_n^{j}(W)\not=\emptyset$ whenever $i\not=j$ and $|i-j|\le nN$.
 It follows that
 $$
 U \cap T_n^{kn}U=\left(\cup_{j=0}^{n-1}T_n^j(W)\cap \cup_{j=0}^{n-1}T_n^{kn+j}(W)  \right)\cap (X\times \{0\})=\emptyset,
$$
for $0<k\le N-1$.
Moreover, 
$$
\cup_{k\in \Z} T_n^{kn}U =\left(\cup_{j\in \Z}T_n^j(W)\right)\cap (X\times \{0\}) =(X\times \Z_n)\cap (X\times \{0\})=X\times \{0\}.
$$
It means that $U$ is a $(N-1)$-marker of $(X\times \{0\}, T_n^n)$. Since $N$ is chosen arbitrarily, the system $(X\times \{0\}, T_n^n)$ has the marker property.
\end{proof}

Now we present the main result in this section.
\begin{thm}[=Theorem \ref{main thm}]\label{main thm 2}
For any $\eta>0$, there exists an aperiodic dynamical system with mean dimension small than $\eta$ which does not have the marker property. 
\end{thm}
\begin{proof}
By Theorem \ref{main thm}, there exists an aperiodic finite mean dimensional dynamical system $(\XX, T)$ which does not have the marker property. Let $n\ge 1$ and let $(\XX\times \Z_n, {T}_n)$ be the dynamical system defined as above. By Lemma \ref{prop:suspension flow and marker property}, the dynamical system  $(\XX\times \Z_n, {T}_n)$ does not have the marker property. Notice that
$$
\mdim(\XX\times \Z_n, {T}_n)=\frac{1}{n} \mdim(\XX, T).
$$
We complete the proof by taking $n$ arbitrarily large.
\end{proof}

\section{Open problem}\label{sec:open problem}

In this section, we discuss open problems related to the results of the current paper. Firstly, we present a corollary of Theorem \ref{main thm 2}.

\begin{cor}\label{cor:any mean dimension example}
For any number $\eta\in (0, +\infty]$, there exists an aperiodic dynamical system with mean dimension equal to $\eta$ which does not have the marker property.
\end{cor}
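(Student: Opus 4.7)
The plan is to reduce Corollary \ref{cor:any mean dimension example} to Theorem \ref{main thm 2} by a disjoint-union construction. Theorem \ref{main thm 2} already supplies aperiodic examples without the marker property of arbitrarily small positive mean dimension; what remains is to inflate the mean dimension up to any prescribed value $\eta \in (0,\infty]$ without restoring the marker property, which can be accomplished by disjoint-unioning the small example with an aperiodic system of mean dimension exactly $\eta$ (which may perfectly well have the marker property).

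Two elementary facts about the disjoint union $(A \sqcup B, T \sqcup S)$ of two dynamical systems drive the whole argument. First, $\mdim(A \sqcup B, T \sqcup S) = \max(\mdim(A, T), \mdim(B, S))$: any finite open cover of $A \sqcup B$ restricts to open covers of the two clopen pieces without any penalty in $\text{ord}$, so $D$ and hence the joined quantity $D\!\left(\bigvee_{i=0}^{n-1}(T\sqcup S)^{-i}\alpha\right)$ are computed as the maximum over the pieces, and dividing by $n$ and taking suprema over covers yields the stated identity. Second, the disjoint union has the marker property if and only if each piece does; in particular, if $U$ is an $N$-marker of $A \sqcup B$, then $U \cap A$ is open in $A$, its first $N$ dynamical translates remain pairwise disjoint, and $A = \bigcup_{n\in\Z} T^n(U \cap A)$ because $A$ is invariant and $A \subset \bigcup_{n\in\Z} T^n U$.

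Granting these, I will, for finite $\eta \in (0,\infty)$, apply Theorem \ref{main thm 2} to obtain an aperiodic $(X_0, T_0)$ with $\mdim(X_0, T_0) < \eta$ and no marker property, and I will produce an aperiodic $(Y, S)$ with $\mdim(Y, S) = \eta$ by taking $(Y, S) = (W \times Z, R \times \sigma)$, where $(W, R)$ is the dyadic odometer (aperiodic with $\mdim(W,R)=0$) and $(Z, \sigma)$ is a subshift of $(([0,1]^N)^\Z, \sigma)$ of mean dimension exactly $\eta$, whose existence for any $\eta \in (0, N]$ is classical in the wake of Lindenstrauss--Weiss. Aperiodicity of the product is forced by aperiodicity of $W$, and $\mdim(W \times Z) = \eta$ follows from sub-additivity of mean dimension under products (giving the upper bound $\eta$) together with the projection factor map onto $Z$ (giving the matching lower bound). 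For $\eta = \infty$ one takes instead $(Z, \sigma) = (([0,1]^\N)^\Z, \sigma)$. The candidate example is $(X_0 \sqcup Y, T_0 \sqcup S)$, which is aperiodic (both pieces are), has $\mdim = \max(\mdim(X_0), \mdim(Y)) = \eta$, and fails the marker property because $X_0$ does.

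No step presents a serious obstacle: Theorem \ref{main thm 2} does the real work, and the existence of aperiodic systems of arbitrary prescribed mean dimension is standard. The only verifications that demand care are the two bookkeeping statements about disjoint unions, and even these are routine consequences of the fact that a finite open cover of a clopen disjoint union refines independently on the two pieces without cost in $\text{ord}$.
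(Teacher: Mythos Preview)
Your proof is correct and follows essentially the same approach as the paper: take the example from Theorem \ref{main thm 2} with mean dimension below $\eta$ and form its disjoint union with an aperiodic system of mean dimension exactly $\eta$, then use that $\mdim$ of a disjoint union is the maximum and that the marker property passes to clopen invariant pieces. The only cosmetic difference is that the paper obtains the second piece directly as a minimal system of mean dimension $\eta$ via the Lindenstrauss--Weiss construction (minimality already giving aperiodicity), whereas you manufacture aperiodicity by crossing a subshift of the right mean dimension with the dyadic odometer; both routes are valid and equally short.
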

\begin{proof}
Let $\eta\in (0, +\infty]$. By Theorem \ref{main thm 2},  there exists an aperiodic dynamical system $(X,T)$ with mean dimension small than $\eta$ which does not have the marker property. Using the construction by Lindenstrauss and Weiss \cite[Page 10-11]{LinWeiss2000MeanTopologicalDimension} (see also \cite[Section 3]{lindenstraussTsukamoto2014mean}), there is a minimal dynamical system $(Y,S)$ with mean dimension equal to $\eta$.
By definition of mean dimension, it is easy to check that $\mdim((X, T) \cup (Y,S) )=\max\{ \mdim(X, T), \mdim(Y,S) \}=\eta$. Since $(X, T) $ and $(Y,S)$ are both aperiodic, the system $(X, T) \cup (Y,S)$ is also aperiodic. Moreover, $(X, T) \cup (Y,S)$ does not have the marker property. Indeed, if $(X, T) \cup (Y,S)$ has the marker property, then so do both $(X, T) $ and $(Y,S)$. It is a contradiction because $(X, T) $ does not have the marker property. Thus the system $(X, T) \cup (Y,S)$ is what we desire. 
\end{proof}


As it was discussed in \cite{tsukamoto2020markerproperty}, one need certain additional condition besides the aperiodicity to guarantee the marker property. Due to Corollary \ref{cor:any mean dimension example}, the condition of zero mean dimension seems plausible. Here we restate \cite[Conjecture 7.4]{tsukamoto2020markerproperty} as follows. 
\begin{conj}\label{conj:zero mean dimension}
An aperiodic dynamical system with zero mean dimension has the
marker property.
\end{conj}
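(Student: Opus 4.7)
The plan is to deduce the marker property from zero mean dimension by way of the \emph{small boundary property} (SBP) of Lindenstrauss--Weiss. Recall that a system $(X,T)$ has SBP if every point has arbitrarily small open neighborhoods whose boundaries have zero measure against every $T$-invariant Borel probability measure. Lindenstrauss and Weiss proved that for minimal systems SBP is equivalent to $\mdim(X,T)=0$, and Lindenstrauss \cite{L99} showed that aperiodic systems with SBP satisfy a topological Rokhlin tower lemma (reproduced as Lemma \ref{lem:Lindenstrauss} in the excerpt) from which the marker property is essentially read off. So the strategic target would be to derive SBP from the hypotheses of zero mean dimension plus aperiodicity, and then to mechanically translate SBP into the marker property.

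First I would attempt to generalize the Lindenstrauss--Weiss equivalence $\mdim=0 \Leftrightarrow \text{SBP}$ from the minimal category to the aperiodic one. The minimal-case proof proceeds by a variational-principle-style argument: one covers $X$ by open sets with small measure-theoretic boundary against a single invariant measure, then leverages unique features of the minimal setting to upgrade to a uniform topological statement. For general aperiodic $(X,T)$ with $\mdim=0$, such covers exist against each invariant measure separately, and the task is to make them uniform across the compact space $M(X,T)$ of invariant measures. I would try a compactness / partition-of-unity argument to produce a single finite open cover $\mathcal{U}$ of arbitrarily small order such that $\mu(\partial U)=0$ for every $U\in\mathcal{U}$ and every $\mu\in M(X,T)$, then iterate to obtain a basis. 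Given such SBP, Step 2 is routine: fix $N$, use SBP and a Zorn-style maximality argument to produce an open set $U\subset X$ satisfying $U\cap T^{-n}U=\emptyset$ for $0<n<N$ and $\mu\bigl(X\setminus\bigcup_{k\in\Z}T^kU\bigr)=0$ for every $\mu\in M(X,T)$; the complement of the orbit of $U$ is then closed, $T$-invariant, and supports no invariant measure, hence is empty because aperiodicity rules out fixed points in any nonempty minimal subsystem. Converting the family of $U=U_N$ into a continuous $\varphi\colon X\to\R$ via Gutman's reformulation \cite{Gut15Jaworski} then gives the marker property.

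The main obstacle, by a wide margin, is the first step: extending SBP beyond the minimal case is essentially the Lindenstrauss conjecture itself, and all attempts I am aware of stall at exactly this measure-theoretic-to-topological uniformization. A second, more speculative route uses the $\Z_p$-coindex theory of Section \ref{sec:Coindex}: one would try to show that $\mdim(X,T)=0$ forces $\coindexPER_p(X,T)$ to be uniformly bounded in $p$, since the counterexamples to aperiodicity-implies-marker constructed in this paper all relied on unbounded growth of periodic coindex. Turning such a coindex bound into the marker property, however, would require a kind of converse to Lemma \ref{lem:to Y} and Lemma \ref{lem:to Z} that is not available with current technology, so this route is not more than a suggestive reformulation.
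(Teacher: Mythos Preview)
The statement you are attempting to prove is not a theorem in the paper: it is \emph{Conjecture}~\ref{conj:zero mean dimension}, explicitly left open in Section~\ref{sec:open problem}. The paper provides no proof, and in fact remarks that this conjecture is equivalent to Lindenstrauss's Conjecture~\ref{conj:Lindenstrauss} (aperiodic plus zero mean dimension implies SBP), which is likewise open.

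Your proposal is therefore not competing with any argument in the paper; rather, it is a strategy sketch for an open problem. You yourself identify the gap clearly: Step~1, the passage from $\mdim=0$ to SBP outside the minimal setting, is precisely Conjecture~\ref{conj:Lindenstrauss}, and your compactness/partition-of-unity idea for uniformizing over $M(X,T)$ is where all known attempts stall. The reduction you outline (Conjecture~\ref{conj:zero mean dimension} $\Leftrightarrow$ Conjecture~\ref{conj:Lindenstrauss}) is exactly what the paper records, citing \cite{tsukamoto2020markerproperty}; the direction SBP $\Rightarrow$ marker property (your Step~2) is the known half, while zero mean dimension plus marker property $\Rightarrow$ SBP is Lindenstrauss's \cite[Theorem~6.2]{L99}. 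Your second route via uniform bounds on $\coindexPER_p$ is, as you acknowledge, only a reformulation and not a proof.

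In short: there is no proof in the paper to compare against, and your proposal is an accurate diagnosis of why the conjecture is hard rather than a resolution of it.
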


Gutman \cite[Theorem 6.1]{Gut15Jaworski} proved that every finite dimensional aperiodic dynamical system has the marker property. Thus it reduces Conjecture \ref{conj:zero mean dimension} to the case that aperiodic dynamical systems have zero mean dimension and are infinite dimensional. However, it seems difficult and is widely open in such case.

A dynamical system is said to have the {\it small boundary property} if for any point one can find a neighborhood whose boundary is a null set. Lindenstrauss \cite{L99} proposed the following conjecture. As it is pointed out in \cite{tsukamoto2020markerproperty}, Conjecture \ref{conj:zero mean dimension} is indeed equivalent to the following conjecture proposed by Lindenstrauss.
\begin{conj}\label{conj:Lindenstrauss}
An aperiodic dynamical system with zero mean dimension has the
small boundary property.
\end{conj}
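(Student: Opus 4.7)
The plan is to target the equivalent Conjecture \ref{conj:zero mean dimension} rather than the small-boundary formulation directly, since by Lemma \ref{lem:Lindenstrauss} the marker property for $(X,T)$ is in turn equivalent to producing, for each $N$, a continuous function $\varphi:X\to\R$ whose defect set $E=\{x:\varphi(Tx)\ne\varphi(x)+1\}$ satisfies $E\cap T^{-k}E=\emptyset$ for $1\le k\le N$. So the concrete goal is to build such an $\R$-valued ``quasi-cocycle'' $\varphi$ from aperiodicity and $\mdim(X,T)=0$ alone, and then the equivalence noted just before the statement upgrades the marker property to SBP.

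First I would exploit aperiodicity via a uniform gap: by compactness $\inf_{x\in X}d(x,T^kx)\ge c_k>0$ for every fixed $k\ge 1$. Using $\mdim(X,T)=0$, for any finite open cover $\alpha$ of $X$ one has $D(\bigvee_{i=0}^{n-1}T^{-i}\alpha)/n\to 0$. I would pick $\alpha$ of mesh much smaller than $c_N$ and then a refinement $\beta\succ\bigvee_{i=0}^{n-1}T^{-i}\alpha$ with $n\gg N$ and $\mathrm{ord}(\beta)<\varepsilon n$. Intuitively, $\beta$ presents orbit segments of length $n$ as a space of ``average dimension'' $\varepsilon$, so that the barycentric coordinates with respect to $\beta$ furnish a low-order equivariant map from $X$ into a simplicial complex, on which a piecewise-linear version of the desired $\varphi$ can be built directly, in analogy with Gutman's construction of markers for finite-dimensional aperiodic systems.

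Second, to pass from this cover-level construction to a global continuous $\varphi$, I would set up a Rokhlin-tower-type decomposition of $X$ whose base is a small open set $B$ with pairwise disjoint iterates $T^{-k}B$ for $0<k\le N$ and whose orbit covers $X$; on each column $\varphi$ is defined linearly, and the dimensional slack coming from $\mathrm{ord}(\beta)<\varepsilon n$ is precisely what allows neighboring columns to be glued continuously. Taking finer covers $\alpha_j$, pushing $N_j\to\infty$, and diagonalizing via an Arzel\`a--Ascoli argument on a suitable metric should then produce a single $\varphi$ realizing the $N$-marker property for every $N$ simultaneously, which combined with the equivalence of the two conjectures delivers SBP.

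The main obstacle, and the reason the conjecture has remained open for two decades, is that $\mdim=0$ supplies only an \emph{asymptotic and averaged} control of topological dimension along orbits, whereas the marker/SBP conclusion requires \emph{pointwise, fixed-scale} control of open sets with small boundaries. The sets in $\beta$ furnished by the mean-dimension machinery may individually carry very tangled boundaries that get amplified under $T^{-k}$ and wreck the continuous gluing in the second step; the averaged bound $\mathrm{ord}(\beta)<\varepsilon n$ does not distinguish the ``good'' columns from the ``bad'' ones, and there is no obvious topological input controlling boundary-size at a fixed scale. Closing this gap plausibly requires a genuinely new ingredient---for instance, a structural representation of aperiodic zero-mean-dimensional systems as inverse limits of finite-dimensional aperiodic systems, which by Gutman's theorem \cite[Theorem 6.1]{Gut15Jaworski} would reduce matters to checking stability of the marker property under such inverse limits. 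Without such additional structural input, I do not expect the plan above to close the argument.
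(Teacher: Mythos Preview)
The statement you are attempting is labeled a \emph{Conjecture} in the paper, and the paper offers no proof of it; immediately after stating it the author notes only that Lindenstrauss proved the implication ``marker property $+$ zero mean dimension $\Rightarrow$ SBP'' and that the conjecture ``is still open in general.'' So there is no paper proof to compare against, and your own concluding paragraph is the honest assessment: the plan does not close.

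On the substance, your diagnosis of the obstruction is accurate. The mean-dimension hypothesis gives only the averaged bound $\mathrm{ord}(\beta)<\varepsilon n$ on refinements of $\bigvee_{i=0}^{n-1}T^{-i}\alpha$, and nothing in that bound localizes to produce an open set $B$ with $B,T^{-1}B,\dots,T^{-N}B$ pairwise disjoint \emph{and} $\bigcup_k T^{-k}B=X$. Your ``second step'' assumes precisely the Rokhlin-tower decomposition whose existence is the content of the marker property, so the argument is circular at that point. The suggested workaround via an inverse-limit representation by finite-dimensional aperiodic systems is also not available: Lindenstrauss's theorem that zero mean dimension implies such an inverse-limit structure already \emph{requires} the marker property as a hypothesis, so invoking it here would again be circular. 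In short, the paper treats this as a genuine open problem, and your sketch correctly identifies why a direct attack along these lines stalls.
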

Lindenstrauss \cite[Theorem 6.2]{L99} proved that if a 
dynamical system with
 zero mean dimension has the marker property then it has the small boundary property. Nevertheless, Conjecture \ref{conj:Lindenstrauss} is still open in general. 
 


\section*{Acknowledgement} 
We thank Masaki Tsukamoto and Yonatan Gutman for valuable discussion and remarks.

\bibliographystyle{alpha}
\bibliography{universal_bib}

\end{document}